\author{Michael W. Schroeder\affiliationmark{1}
  \and Rebecca Smith\affiliationmark{2}\thanks{partially supported by the NSA Young Investigator Grant H98230-08-1-0100}}
\title
{A Bijection on Classes Enumerated by the Schr\"oder Numbers}
\affiliation{
  Marshall University, Huntington, WV USA\\
  SUNY Brockport, Brockport, NY USA}
\keywords{algorithm, data structure, lattice path, permutation pattern, Schr\"oder Number, sorting, stack}
\newcommand{\doublehat}[1]{\begin{array}{@{}c@{\hspace{-.5pt}}}\\[-3.25ex]\hat{}\\[-3ex]\hat{}\\[-2.3ex]#1\\[-.6ex]\end{array}}
\newcommand{\doubleExphat}[1]{\scalebox{.8}{$\doublehat{#1}$}}
\newtheorem{theorem}{Theorem}[section]
\newtheorem{lemma}[theorem]{Lemma}
\newtheorem{corollary}[theorem]{Corollary}
\newtheorem{algorithm}[theorem]{Algorithm}
\theoremstyle{definition}
\newtheorem{example}[theorem]{Example}
\newtheorem{observation}[theorem]{Observation}
\newtheorem{definition}[theorem]{Definition}
\newcommand{\DI}[1]{\texttt{\tt #1}}
\newcommand{\pic}[2][1]{\includegraphics[scale=#1,page=#2]{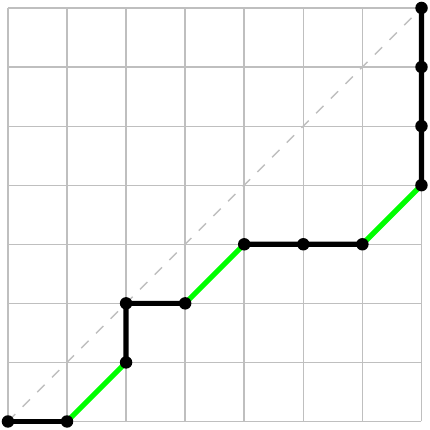}}
\renewcommand{\th}{\textsuperscript{th}}
\newcommand{\lew}[1][W]{<_{\scalebox{.5}{$#1$}}}
\newcommand{\precw}[1][W]{\prec_{\scalebox{.5}{$#1$}}}
\newcommand{\leqw}[1][W]{\leq_{\scalebox{.5}{$#1$}}}
\begin{document}
\publicationdetails{18}{2017}{2}{15}{1326}
\maketitle

\begin{abstract}
We consider a sorting machine consisting of two stacks in series where the first stack has the added restriction that entries in the stack must be in decreasing order from top to bottom.  
The class of permutations sortable by this machine is known to be enumerated by the Schr\"oder numbers.  
In this paper, we give a bijection between these sortable permutations of length $n$ and Schr\"oder paths of order $n-1$: the lattice paths from $(0,0)$ to $(n-1,n-1)$ composed of East steps $(1,0)$, North steps $(0,1)$, and Diagonal steps $(1,1)$ that travel weakly below the line $y=x$.
\end{abstract}

\section{Introduction}

A stack is a sorting device that works by a sequence of push and pop operations.  This last-in, first-out machine was shown by Knuth~\cite{knuth:the-art-of-comp:1} to sort a permutation if and only if that permutation avoids the pattern $231$.  That is, if there are not three indices $i<j<k$ with $\pi_k<\pi_i<\pi_j$, then it is possible to run $\pi$ through a stack and output the identity permutation.  The class of stack-sortable permutations is enumerated by the Catalan numbers.

In the language of permutation patterns, any downset of permutations in the permutation containment ordering is a \emph{class}, and every class has a \emph{basis}, which consists of the minimal permutations not in the class.  
Given that the basis for the class of permutations sortable by one stack contains only a single pattern of length three, considering two stacks in series is quite natural.  However, the problem becomes rather unwieldy.  In the case of two stacks in series, Murphy~\cite{murphy:restricted-perm:} showed that the class of sortable permutations has an infinite basis.  The enumeration of this class also appears to be difficult.  The best known bounds are given by Albert, Atkinson, and Linton~\cite{albert:permutations-ge:}.

We note that to sort a permutation by two stacks in series, the push and pop operations are such that when an entry is popped out of the first stack, it is immediately pushed into the second stack.

To get a better handle on this problem, many have considered different types of weaker sorting machines.  One such weaker machine is a stack in which the entries must increase when read from top to bottom.  
Atkinson, Murphy, and Ru\v{s}kuc~\cite{atkinson:sorting-with-tw:} found an optimal algorithm for sorting permutations which are sortable using two increasing stacks in series.  
Note that to obtain the identity permutation, the last stack will be an increasing stack even without declaring this restriction.
Their left-greedy algorithm sorts all permutations sortable by this machine.  
Interestingly enough, the basis for these sortable permutations is still infinite, but the permutation class was found to be in bijection with the permutations that avoid $1342$ as enumerated by B\'ona~\cite{bona:exact-enumerati:}.  Both enumerations were found by using a bijection with $\beta(0,1)$ trees.  

One can analogously define a \emph{decreasing stack} as a stack in which the entries must decrease when read from top to bottom.  Smith~\cite{smith:a-decreasing-st:} considered sorting with a decreasing stack followed by an increasing stack, a machine called DI (we refer to the decreasing stack as D and the increasing stack as I).  We illustrate how a permutation can be sorted with the DI sorting machine in Figure~\ref{DI_example}.  Notice that this permutation contains the pattern $2341$, and as such cannot be sorted by two increasing stacks in series.  
The class of DI-sortable permutations was shown to have a finite basis, $\{3142, 3241\}$.  Kremer~\cite{kremer:permutations-wi:,kremer:postscript:-per:} has shown previously that this class is enumerated by the large Schr\"oder numbers.  
\begin{figure}[ht]
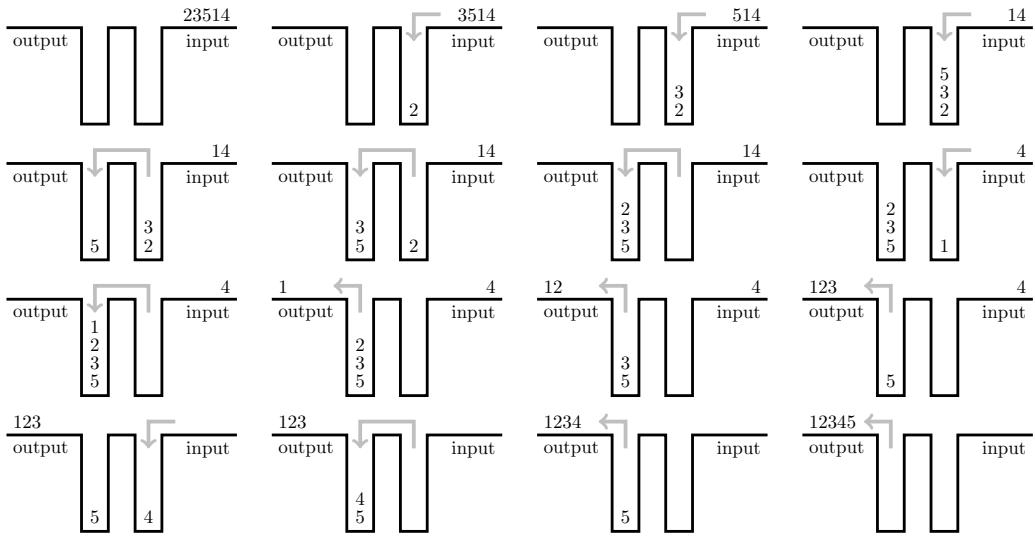

\centering
\scalebox{.95}{\begin{tabular}{cccc}
\pic[.75]{14} &
\pic[.75]{15} &
\pic[.75]{16} &
\pic[.75]{17} \\
\pic[.75]{18} &
\pic[.75]{19} &
\pic[.75]{20} &
\pic[.75]{21} \\
\pic[.75]{22} &
\pic[.75]{23} &
\pic[.75]{24} &
\pic[.75]{25} \\
\pic[.75]{26} &
\pic[.75]{27} &
\pic[.75]{28} &
\pic[.75]{29} \\
\end{tabular}}
\caption{Sorting the permutation $23514$.}
\label{DI_example}
\end{figure}

The Schr\"oder numbers were introduced in somewhat modern times by Schr\"oder~\cite{schroder:vier} as legal bracketing of variables, though at least the initial terms of this sequence were known to Hipparchus~\cite{stanley:schroder}.  
Rogers and Shapiro~\cite{rogers:schroder} found bijections showing certain classes of lattice paths are enumerated by the Schr\"oder numbers.
While there are several such classes of lattice paths, we use the one given in the definition below.

\begin{definition}
Let $n\geq 1$.
A lattice path from $(0,0)$ to $(n,n)$  taking only East $(1,0)$, North $(0,1)$, and Diagonal $(1,1)$ steps while staying weakly below the main diagonal will be referred to as a \emph{Schr\"oder path}.  
We illustrate all of the Schr\"oder paths from $(0,0)$ to $(2,2)$ in Figure~\ref{paths}.
\end{definition}

\begin{figure}[ht]
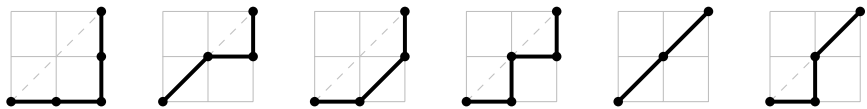

\centering
\begin{tabular}{cccccc}
 \pic{8} &
 \pic{9} &
\pic{10} &
\pic{11} &
\pic{12} &
\pic{13} \\
\end{tabular}
\caption{Schr\"oder paths from $(0,0)$ to $(2,2)$.}
\label{paths}
\end{figure}

In Section \ref{sec:sorting_algorithm}, we state the algorithm from \cite{smith:a-decreasing-st:} for producing a word from a DI-sortable permutation, then classify when a sorting word is the result of the algorithm.
In Section \ref{sec:bijection}, we give an algorithm which produces a Schr\"oder path from a word produced by the algorithm in Section \ref{sec:sorting_algorithm}, then show there is a bijective correspondence between algorithmic DI words and Schr\"oder paths.
In Section \ref{sec:properties}, we discuss some properties of DI-sortable permutations which can be ascertained from aspects of their corresponding Schr\"oder paths, and we conclude with some open questions.

On a related note, Ferrari~\cite{ferrari:permutation-classes} gave bijections between permutations sorted by restricted deques and a different class of lattice paths also enumerated by the Schr\"oder numbers.  
Also, Bandlow, Egge, and Killpatrick~\cite{bandlow:schroder} gave a bijection between a different permutation class and Schr\"oder paths.  Additionally, more background on stacks in general was consolidated by Bona~\cite{bona:a-survey-of-sta:}.

\section{The Sorting Algorithm}
\label{sec:sorting_algorithm}
Let $W$ be a word, $i$ be a positive integer, and \DI{L} be a letter.
In what follows, we denote the letter in the $i$\textsuperscript{th} position of $W$ as $W(i)$, 
$\#_\DI{L}(W)$ as the number of occurrences of \DI{L} in $W$, and 
$W^i$ as the first $i$ characters in $W$.
We often refer to $W^i$ as a \emph{prefix} of $W$.
We use $\DI{L}^i$ to denote $i$ repetitions of \DI{L}.

In \cite{smith:a-decreasing-st:}, Smith showed the following algorithm was an optimal way to sort permutations using the DI machine in the sense that this algorithm would sort any DI-sortable permutation.

\begin{algorithm}~\label{sorting_algorithm}
\begin{enumerate}
\item{If the top entry of the second (increasing) stack  is the next entry of the output, then pop the entry to the output.}
\item{If all of the $m$ entries in the first (decreasing) stack  make up the next $m$ entries of the output, then push those entries to the second stack.}
\item{Otherwise, if the next entry of the input is smaller than the top entry of the second stack and larger than the top entry of the first stack, then push it onto the first stack.  
We will apply the convention that the next entry of the input satisfies each of the aforementioned properties if there is no entry in the corresponding stack with which to compare it.  
Thus, this step can be thought of as pushing the next entry from the input to the top of the first stack if that entry can legally sit atop each of the two stacks at this stage.}
\item{Finally if neither of those moves are available, push the top entry from the first stack to the second.}
\end{enumerate}
\end{algorithm}

It was noted in \cite{smith:a-decreasing-st:} that Step 2 of this algorithm is not needed to produce an optimal algorithm.  
However, we continue to utilize it as outputting entries earlier in the process is useful in constructing a bijection between the DI-sortable permutations and Schr\"oder paths.

\begin{definition} 
\label{def:enc}
Let $\DI{E}$ represent a push step from the input to the first stack, let $\DI{N}$ represent pop/push from the first stack to the second stack, and let $\DI{C}$ represent the pop from the second stack to the output.
See Figure~$\ref{fig:defenc}$.
A \emph{sorting word} of a permutation is a word representing steps that can be taken to sort that permutation using two stacks in series (without restriction).
We use $\DI{L}_i$ to denote the occurrence of the letter \DI{L} in $W$ which corresponds to the movement of symbol $i$.
\end{definition}

\begin{figure}[h]
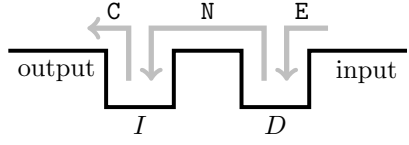

\centering
\pic{43}
\caption{The operations \DI{E}, \DI{N}, and \DI{C} corresponding to the DI machine from Definition \ref{def:enc}.}
\label{fig:defenc}
\end{figure}
\begin{observation}~\label{observation}
A word $W$ is a sorting word of a sortable permutation of length $n$ if and only if the following two conditions are met:
\begin{enumerate}
\item The length of $W$ is $3n$ and $W$ contains exactly $n$ of each of the letters $\DI{E}$, $\DI{N}$, and $\DI{C}$.
\item For all $x\in[3n]$, $\#_\DI{E}(W^x) \geq \#_\DI{N}(W^x) \geq \#_\DI{C}(W^x)$.
That is, in any prefix of $W$, the number of \DI{C}s does not exceed the number of \DI{N}s, which does not exceed the number of \DI{E}s.
\end{enumerate}
Note that for every sorting word, there exists a permutation $\pi$ which it sorts.
To find $\pi$, apply $W$ to the identity permutation, and let $\sigma$ be the output (which is not necessarily sorted).
Then $W$ sorts $\pi=\sigma^{-1}$. 
\end{observation}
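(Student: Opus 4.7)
The plan is to prove the two directions of the biconditional separately, and then verify the final construction of the permutation $\pi$ from $W$.

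For necessity, suppose $W$ is a sorting word for some permutation of length $n$. Each of the $n$ entries must cross the machine exactly once, being pushed from the input onto the first stack (an \DI{E}), moved from the first stack to the second (an \DI{N}), and finally popped from the second stack to the output (a \DI{C}). Hence $W$ has length $3n$ with exactly $n$ of each letter, giving condition 1. For condition 2, note that after processing the prefix $W^x$ the first stack contains $\#_\DI{E}(W^x)-\#_\DI{N}(W^x)$ entries and the second stack contains $\#_\DI{N}(W^x)-\#_\DI{C}(W^x)$ entries; since neither can be negative, condition 2 follows.

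For sufficiency, suppose $W$ satisfies conditions 1 and 2. I first argue that $W$ encodes a legal sequence of operations when applied to the input $1,2,\ldots,n$: condition 1 guarantees that exactly $n$ entries are pushed in by the $n$ occurrences of \DI{E}, and condition 2 guarantees that each \DI{N} and each \DI{C} happens when the source stack is nonempty, by the prefix-count bookkeeping above. Let $\sigma$ be the resulting output sequence, which is a permutation of $[n]$ since each entry passes through the machine once.

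Now consider feeding $\pi_1,\pi_2,\ldots,\pi_n$ into the machine and performing the same sequence $W$ of operations. The crucial point is that \DI{E}, \DI{N}, and \DI{C} act purely positionally on the input queue and the two stacks --- they do not depend on the numerical values of the entries being moved. Consequently, the entry that sits in input position $i$ traces exactly the same trajectory through the two stacks that the value $i$ did when we ran $W$ on the identity, so the output sequence becomes $\pi_{\sigma_1},\pi_{\sigma_2},\ldots,\pi_{\sigma_n}$. Taking $\pi=\sigma^{-1}$ makes this output the identity, showing that $W$ sorts $\pi$. The only delicate point, and the thing to state carefully, is this positional invariance of the three operations, which is what lets us ``replay'' $W$ on any input and thereby conclude that every word satisfying conditions 1 and 2 is in fact the sorting word of some concrete permutation.
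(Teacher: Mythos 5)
Your proof is correct and follows essentially the same route the paper intends: the paper states this as an observation without a formal proof, and the inverse construction it sketches (run $W$ on the identity to get $\sigma$, then $W$ sorts $\pi=\sigma^{-1}$) is exactly your sufficiency argument. Your added details --- the prefix-count bookkeeping showing each \DI{N} and \DI{C} acts on a nonempty stack, and the explicit appeal to the positional (value-independent) nature of the three operations --- fill in precisely what the paper leaves implicit.
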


\begin{definition} 
A \emph{DI word} of a permutation $\pi$ is a sorting word representing steps that can be taken to sort $\pi$ using the DI machine.  
\end{definition}

\begin{example}  
The permutation $23514$ has four DI words: \DI{EEENNNENCCCENCC} (illustrated in Figure \ref{DI_example}), \DI{EEENNNENCCECNCC}, \DI{EEENNNENCECCNCC}, and \DI{EEENNNENECCCNCC}.
\end{example}

\begin{definition}  
The \emph{algorithmic DI word} (ADI word for short) of a permutation $\pi$ is the unique word representing steps that will be taken when applying Algorithm~\ref{sorting_algorithm} to sort $\pi$ by the DI machine.
\end{definition}

\begin{example}  
\label{ex:adi}
Algorithm \ref{sorting_algorithm} was used in the process illustrated by Figure \ref{DI_example}.
So the ADI word of $23514$ is \DI{EEENNNENCCCENCC}.
To use the indices outlined in Definition~\ref{def:enc}, the ADI word of $23514$ can also be given as 
$\DI{E}_2
\DI{E}_3
\DI{E}_5
\DI{N}_5
\DI{N}_3
\DI{N}_2
\DI{E}_1
\DI{N}_1
\DI{C}_1
\DI{C}_2
\DI{C}_3
\DI{E}_4
\DI{N}_4
\DI{C}_4
\DI{C}_5
$.
\end{example}

For the duration of the paper, we say that $\DI{X} \lew \DI{Y}$ in a word $W$ if $\DI{X}$ appears before $\DI{Y}$ in $W$.
Similarly we say $\DI{X}\precw\DI{Y}$ if $\DI{X}$ appears immediately before $\DI{Y}$ in $W$.

We begin with some observations about the relative locations of symbols in a DI-word, then follow with a classification of when a sorting word is a DI-word and an ADI-word.

\begin{observation}
\label{obs:wordprops}
Let $\pi$ be a DI-sortable permutation and let $W$ be a DI word.
Let $i,j\in[n]$.
\begin{enumerate}[(i)]
\item $\DI{E}_{\pi_i}\lew\DI{E}_{\pi_j}$ if and only if $i < j$; symbols appearing in $\pi$ earlier enter the first stack earlier.
\item\label{obs:wordpropsCC} $\DI{C}_{\pi_i}\lew\DI{C}_{\pi_j}$ if and only if $\pi_i < \pi_j$; symbols must exit the second stack in increasing order.
\item\label{obs:wordpropsCN} 
If $\pi_i < \pi_j$ and $\DI{N}_{\pi_i}\lew\DI{N}_{\pi_j}$, then $\DI{C}_{\pi_i}\lew\DI{N}_{\pi_j}$; 
if a smaller number enters the second stack before a larger number, it must be output before the larger number can enter the second stack -- otherwise the increasing condition for the second stack is not satisfied.
Hence if $\pi_i < \pi_j$ and $\DI{N}_{\pi_i}\lew\DI{E}_{\pi_j}$, then $\DI{C}_{\pi_i}\lew\DI{N}_{\pi_j}$.
We highlight this specifically as it will be used in later proofs.
\item\label{obs:wordpropsNNCCC} If $\DI{N}_{\pi_i} \lew \DI{N}_{\pi_j} \lew \DI{C}_{\pi_i}$, then $\DI{C}_{\pi_j}\lew\DI{C}_{\pi_i}$;
if $\pi_j$ enters the second stack after $\pi_i$ and before $\pi_i$ exits, then $\pi_j < \pi_i$ by the increasing condition of the second stack. So $\pi_j$ must exit before $\pi_i$.
\item\label{obs:wordpropsNE2} If $\DI{E}_{\pi_j} \lew \DI{C}_{\pi_i} \lew \DI{N}_{\pi_j}$ (which implies $\pi_i < \pi_j$), then $\DI{N}_{\pi_i} \lew \DI{E}_{\pi_j}$;
if a smaller number is pushed to the output while a larger number is in the first stack, the smaller number was popped from the first stack before the larger number entered the first stack -- otherwise the larger number must be pushed to the second stack before the smaller number is pushed to the second stack.
\item\label{obs:wordpropsENE} If $\pi_i < \pi_j$ and $\DI{E}_{\pi_j}\lew\DI{E}_{\pi_i}$, then $\DI{E}_{\pi_j} \lew \DI{N}_{\pi_j} \lew\DI{E}_{\pi_i}$;
if a larger number enters the first stack before a smaller number does, the larger number must move to the second stack before the smaller number enters the first stack -- otherwise the decreasing condition for the first stack is violated.
\end{enumerate}
\end{observation}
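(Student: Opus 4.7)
The plan is to verify each of the six items as a routine consequence of the DI machine's mechanics encoded in Definition~\ref{def:enc} and Observation~\ref{observation}. Three facts will be used throughout: (a) the input is scanned left to right, so the \DI{E} letters occur in the order of positions in $\pi$; (b) the output must equal the identity, so the \DI{C} letters occur in the order of values $\pi_i$; and (c) both stacks obey LIFO, with the first (decreasing) stack carrying its largest entry on top and the second (increasing) stack carrying its smallest entry on top.

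Items (i) and (ii) are immediate from (a) and (b) respectively. For (iii), I would argue by contradiction: if $\pi_i<\pi_j$ and $\pi_j$ were pushed to the second stack while $\pi_i$ were still inside, then $\pi_j$ would rest on top of $\pi_i$, forcing $\pi_j<\pi_i$ by the increasing invariant; hence $\pi_i$ must be output before $\pi_j$ is pushed, giving $\DI{C}_{\pi_i}\lew\DI{N}_{\pi_j}$. For (iv), the hypothesis places $\pi_j$ on top of $\pi_i$ in the second stack, so $\pi_j<\pi_i$, and LIFO then forces $\DI{C}_{\pi_j}\lew\DI{C}_{\pi_i}$. Item (vi) is the symmetric argument for the first stack: if $\pi_i<\pi_j$ but $\pi_j$ were still in the first stack when $\pi_i$ is pushed, then $\pi_i$ would sit atop $\pi_j$, contradicting the decreasing invariant; hence $\DI{E}_{\pi_j}\lew\DI{N}_{\pi_j}\lew\DI{E}_{\pi_i}$.

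The one item requiring a touch more bookkeeping is (v). I would first prove the parenthetical $\pi_i<\pi_j$: the hypothesis $\DI{C}_{\pi_i}\lew\DI{N}_{\pi_j}$ combined with the trivial $\DI{N}_{\pi_j}\lew\DI{C}_{\pi_j}$ gives $\DI{C}_{\pi_i}\lew\DI{C}_{\pi_j}$, so (ii) applies. Then I would rule out both alternatives to $\DI{N}_{\pi_i}\lew\DI{E}_{\pi_j}$. In the subcase $\DI{E}_{\pi_i}\lew\DI{E}_{\pi_j}\lew\DI{N}_{\pi_i}$, the entry $\pi_i$ is present in the first stack when $\pi_j$ is pushed, so $\pi_j$ lies above $\pi_i$, forcing $\pi_j<\pi_i$ and contradicting $\pi_i<\pi_j$. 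In the subcase $\DI{E}_{\pi_j}\lew\DI{E}_{\pi_i}$, item (vi) gives $\DI{N}_{\pi_j}\lew\DI{E}_{\pi_i}$, which together with $\DI{E}_{\pi_i}\lew\DI{N}_{\pi_i}\lew\DI{C}_{\pi_i}$ contradicts the hypothesis $\DI{C}_{\pi_i}\lew\DI{N}_{\pi_j}$.

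The main (and only) obstacle is keeping the two stack orientations straight: the top of the decreasing stack is the largest entry, while the top of the increasing stack is the smallest. Once this is fixed, every item reduces to a one-line contradiction from the appropriate stack invariant together with LIFO, and (v) is handled by chaining (vi) into a short case analysis.
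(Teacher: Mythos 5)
Your overall route is the same as the paper's: the observation is justified item by item directly from the mechanics of the machine (input order for the \DI{E}s, output order for the \DI{C}s, LIFO, and the two monotonicity invariants), and your treatments of (i)--(iv) and (vi) match the paper's one-line inline justifications. Your extra bookkeeping for (v) --- deducing $\pi_i<\pi_j$ from (ii) and then splitting on the position of $\DI{E}_{\pi_i}$ relative to $\DI{E}_{\pi_j}$ --- is a reasonable way to make the paper's informal sentence precise, and your second subcase (chaining through (vi)) is correct.

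However, your first subcase of (v) is wrong as written. You claim that in the situation $\DI{E}_{\pi_i}\lew\DI{E}_{\pi_j}\lew\DI{N}_{\pi_i}$ the entry $\pi_j$ lies above $\pi_i$ in the first stack and that this ``forces $\pi_j<\pi_i$.'' That has the orientation of the decreasing stack backwards, and it contradicts the convention you yourself state in your closing paragraph: the top of the decreasing stack is its \emph{largest} entry, so $\pi_j$ sitting above $\pi_i$ forces $\pi_j>\pi_i$, which is perfectly consistent with the standing fact $\pi_i<\pi_j$ (the state with $5$ above $3$ above $2$ in Figure~\ref{DI_example} is exactly such a configuration). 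So there is no violation of the stack invariant in this subcase, and the claimed contradiction evaporates. The contradiction you need here comes from LIFO instead: since $\pi_j$ is above $\pi_i$ in the first stack, $\pi_j$ must be popped to the second stack before $\pi_i$ is, so $\DI{N}_{\pi_j}\lew\DI{N}_{\pi_i}\lew\DI{C}_{\pi_i}$, contradicting the hypothesis $\DI{C}_{\pi_i}\lew\DI{N}_{\pi_j}$. (This is what the paper's parenthetical ``otherwise the larger number must be pushed to the second stack before the smaller number is pushed to the second stack'' is expressing.) With that one repair the argument for (v) is complete and the rest of the proof stands.
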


We now give a classification of when a sorting word is a DI word.

\begin{lemma}~\label{sorting_req}
A sorting word $W$ of a permutation $\pi$ of length $n$ is a DI word if and only if 
for all $i\in[n]$, if no \DI{N}s appear in $W$ between $\DI{N}_{\pi_i}$ and $\DI{C}_{\pi_i}$, then $\#_\DI{E}(W^x) = \#_\DI{N}(W^x)$ where $x$ satisfies $W(x) = \DI{N}_{\pi_i}$; that is at step $x$, the first stack is empty.


Alternatively, we can say a sorting word $W$ is a DI word of some permutation  $\pi$ of length $n$ if and only if every entry in  $\pi$ that preceded the entries output by a sequence of $\DI{C}$s must have moved to the second stack (and possibly the output) prior to this exodus.
\end{lemma}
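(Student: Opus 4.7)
The plan is to prove the biconditional directly, one direction at a time, leaning on the decreasing-stack property of the first stack together with the relative-position facts in Observation~\ref{obs:wordprops}. In particular, parts~(\ref{obs:wordpropsCC}), (\ref{obs:wordpropsCN}), and (\ref{obs:wordpropsNNCCC}) control how symbols must be ordered as they traverse the second stack and will do most of the work.

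For the forward direction, I assume $W$ is a DI word and fix $i\in[n]$ with no $\DI{N}$ appearing between $\DI{N}_{\pi_i}$ and $\DI{C}_{\pi_i}$. I argue by contradiction: if the first stack is still nonempty immediately after $\DI{N}_{\pi_i}$, let $\pi_j$ be the entry that sat directly below $\pi_i$ just before $\DI{N}_{\pi_i}$. The decreasing-stack condition forces $\pi_j<\pi_i$, and since $\pi_j$ has not yet left the first stack, $\DI{N}_{\pi_i}\lew\DI{N}_{\pi_j}$. Observation~\ref{obs:wordprops}(\ref{obs:wordpropsCC}) together with $\pi_j<\pi_i$ then gives $\DI{C}_{\pi_j}\lew\DI{C}_{\pi_i}$, and since $\DI{N}_{\pi_j}\lew\DI{C}_{\pi_j}$ holds for every sorting word, this yields $\DI{N}_{\pi_i}\lew\DI{N}_{\pi_j}\lew\DI{C}_{\pi_i}$. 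This contradicts the choice of $i$, so the first stack is empty at step $x$, i.e.~$\#_\DI{E}(W^x)=\#_\DI{N}(W^x)$.

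For the backward direction I argue the contrapositive. If $W$ is not a DI word, there is a moment at which the first stack fails to be decreasing, so I can pick entries $\pi_i$ sitting directly above $\pi_j$ in the first stack with $\pi_i<\pi_j$. Then $\DI{N}_{\pi_i}\lew\DI{N}_{\pi_j}$, and by Observation~\ref{obs:wordprops}(\ref{obs:wordpropsCN}) we get $\DI{C}_{\pi_i}\lew\DI{N}_{\pi_j}$, so $\pi_j$ is still in the first stack at step $\DI{N}_{\pi_i}$. Set $k_0=i$. If no $\DI{N}$ appears between $\DI{N}_{\pi_{k_0}}$ and $\DI{C}_{\pi_{k_0}}$, then $k_0$ already witnesses the failure of the stated condition. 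Otherwise, pick any intervening $\DI{N}_{\pi_{k_1}}$; Observation~\ref{obs:wordprops}(\ref{obs:wordpropsNNCCC}) gives $\DI{C}_{\pi_{k_1}}\lew\DI{C}_{\pi_{k_0}}$, so the new pair $(\DI{N}_{\pi_{k_1}},\DI{C}_{\pi_{k_1}})$ is a strictly smaller interval that still lies before $\DI{N}_{\pi_j}$. Iterating, the intervals strictly shrink, so the process terminates at some $k^*$ with no intervening $\DI{N}$, and $\pi_j$ is still trapped in the first stack at $\DI{N}_{\pi_{k^*}}$, so $k^*$ witnesses the failure of the condition.

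The main obstacle will be the iterative argument in the backward direction: one must check that $\pi_j$ remains in the first stack throughout the iteration and that the iteration terminates. Both facts follow from Observation~\ref{obs:wordprops}(\ref{obs:wordpropsNNCCC}), which simultaneously pushes the new $\DI{C}$ strictly earlier and the new $\DI{N}$ strictly later, forcing the nested intervals to shrink at every step. The alternative \emph{prior-exodus} reformulation at the end of the lemma should follow by a parallel argument: if an entry $\pi_\ell$ preceding the output of some maximal $\DI{C}$-run were still in the first stack at the start of that run, then choosing the innermost tight $(\DI{N},\DI{C})$-pair inside the run reduces to exactly the contradiction produced above, so the two characterizations are equivalent.
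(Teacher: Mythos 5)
Your proof is correct and follows essentially the same strategy as the paper's: in the forward direction both argue by contradiction from a leftover element in the first stack (you exhibit $\DI{N}_{\pi_j}$ itself as an intervening \DI{N} via the decreasing condition and Observation~\ref{obs:wordprops}~(\ref{obs:wordpropsCC}), while the paper instead contradicts $\DI{E}_{\pi_j}\lew\DI{N}_{\pi_i}$ using Observation~\ref{obs:wordprops}~(\ref{obs:wordpropsNE2})). In the backward direction both locate a tight $(\DI{N},\DI{C})$-pair occurring while the larger violating entry is still trapped in the first stack; the paper gets it in one step by taking the last \DI{N} before the relevant \DI{C}, whereas you reach the same pair by a terminating nested-interval descent using Observation~\ref{obs:wordprops}~(\ref{obs:wordpropsNNCCC}) -- a cosmetic difference only.
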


\begin{proof}
Suppose that $W$ is a DI word of a permutation $\pi$ and for some $i\in[n]$, no \DI{N}s appear in $W$ between $\DI{N}_{\pi_i}$ and $\DI{C}_{\pi_i}$, and $W(x) = \DI{N}_{\pi_i}$.
Assume that the first stack is nonempty after stage $x$, and let $\pi_j$ be a symbol in the first stack.
Then $\DI{E}_{\pi_j}\lew\DI{N}_{\pi_i}\lew\DI{N}_{\pi_j}$, and since no \DI{N}s appear between $\DI{N}_{\pi_i}$ and $\DI{C}_{\pi_i}$, it follows that $\DI{N}_{\pi_i} \lew \DI{C}_{\pi_i}\lew\DI{N}_{\pi_j}$.
By Observation~\ref{obs:wordprops}~(\ref{obs:wordpropsNE2}), we have that $\DI{N}_{\pi_i}\lew\DI{E}_{\pi_j}$, which gives a contradiction.
So at stage $x$, the first stack must be empty.

Conversely, suppose $W$ is a sorting word of a permutation $\pi$ of length $n$ which is not a DI word.
Recall that since $W$ represents the sorting of $\pi$ through two stacks in series, any entries in the second stack will obey the increasing condition at all times.  
Consequently, the movements corresponding to $W$ when applied to $\pi$ must cause a violation of the decreasing condition on the first stack.
In such a case, we have two entries $\pi_i$ and $\pi_j$ where $\pi_i > \pi_j$ and at some point $\pi_i$ is below $\pi_j$ in the first stack.  
That is, $\DI{E}_{\pi_i} \lew \DI{E}_{\pi_j} \lew \DI{N}_{\pi_j} \lew \DI{N}_{\pi_i}$.

Since the end result of the sorting is the identity permutation, we must have $\DI{C}_{\pi_j}\lew\DI{C}_{\pi_i}$.
Therefore $\DI{E}_{\pi_i} \lew \DI{E}_{\pi_j} \lew \DI{N}_{\pi_j} \lew \DI{C}_{\pi_j} \lew \DI{N}_{\pi_i} \lew \DI{C}_{\pi_i}$.
Let $\DI{N}_{\pi_k}$ be the last $\DI{N}$ in $W$ to appear before $\DI{C}_{\pi_j}$.
Since the second stack is increasing and $W$ is a sorting word, $\pi_k\leq\pi_j$ and so $\DI{N}_{\pi_k}\lew \DI{C}_{\pi_k}\leqw \DI{C}_{\pi_j}$.
By the selection of $k$, no \DI{N}s appear in $W$ between $\DI{N}_{\pi_k}$ and $\DI{C}_{\pi_k}$.
Observe that $\pi_i$ is in the first stack (and hence the first stack is nonempty) at the step corresponding to $\DI{N}_{\pi_k}$.
\end{proof}

To similarly characterize our ADI words, we first introduce a lemma showing how ``output-greedy" Algorithm~\ref{sorting_algorithm} is.

%
%

\begin{lemma}~\label{output_greedy}
Let $n\geq 1$ and $i\in[n]$ such that $\pi_i\neq n$.
Let $\pi$ be a DI-sortable permutation of length $n$ with ADI word $W$.
If $\DI{C}_{\pi_i}\not\precw\DI{C}_{\pi_i+1}$, then $\DI{C}_{\pi_i}\lew\DI{E}_{\pi_i+1}$.
That is, if two consecutive values do not exit in two consecutive stages, then the larger value does not enter the machine until after the smaller value exits.
\end{lemma}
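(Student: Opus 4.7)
I will prove the contrapositive: assuming $\DI{E}_{\pi_i+1} \lew \DI{C}_{\pi_i}$, I will show $\DI{C}_{\pi_i} \precw \DI{C}_{\pi_i+1}$. Write $v = \pi_i$ throughout for brevity. The plan is to pin down the location of the value $v+1$ at the moment $\DI{C}_v$ executes. By Observation~\ref{obs:wordprops}(\ref{obs:wordpropsCC}) we have $\DI{C}_v \lew \DI{C}_{v+1}$, so $v+1$ has not yet been output at this moment, and since $\DI{E}_{v+1}$ has already fired, $v+1$ must reside in either the first stack or the second stack.

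The core step, and the main obstacle, is to rule out that $v+1$ sits in the first stack when $\DI{C}_v$ fires. Suppose toward a contradiction that $\DI{E}_{v+1} \lew \DI{C}_v \lew \DI{N}_{v+1}$. Observation~\ref{obs:wordprops}(\ref{obs:wordpropsNE2}) (with $\pi_i = v$ and $\pi_j = v+1$) then forces $\DI{N}_v \lew \DI{E}_{v+1}$. But the letter $\DI{E}_{v+1}$ can only be produced by Step~3 of Algorithm~\ref{sorting_algorithm}, whose precondition demands that the incoming value $v+1$ be strictly smaller than the top of the second stack. Since $v$ entered the second stack at $\DI{N}_v$ and has not yet been popped (as $\DI{C}_v$ still lies later), $v$ is in the second stack at the moment of $\DI{E}_{v+1}$. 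By the increasing-stack discipline, anything pushed onto the second stack while $v$ sits there must be strictly less than the current top, and so every entry of the second stack at this moment is at most $v$. In particular the top is at most $v < v+1$, contradicting the Step~3 precondition. This reduction is the delicate piece: one must verify that nothing occurring between $\DI{N}_v$ and $\DI{E}_{v+1}$ can raise the top of the second stack above $v$, and the increasing-stack condition guarantees exactly that.

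Hence $v+1$ must be in the second stack when $\DI{C}_v$ fires. Since the second stack is increasing from top to bottom and there is no integer strictly between $v$ and $v+1$, the value $v+1$ must sit immediately beneath $v$. After $\DI{C}_v$ pops $v$, the top of the second stack becomes $v+1$, which is exactly the next required output. Step~1 of Algorithm~\ref{sorting_algorithm}, which has highest priority, therefore fires, producing $\DI{C}_{v+1}$ as the very next letter of $W$. This gives $\DI{C}_v \precw \DI{C}_{v+1}$, completing the contrapositive.
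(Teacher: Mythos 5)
Your proof is correct and follows essentially the same route as the paper's: both split on whether $\pi_i+1$ is in the first or second stack when $\DI{C}_{\pi_i}$ fires, use Observation~\ref{obs:wordprops}~(\ref{obs:wordpropsNE2}) together with the Step~3 precondition to kill the first-stack case, and invoke Step~1 to force $\DI{C}_{\pi_i}\precw\DI{C}_{\pi_i+1}$ in the second-stack case. The only differences are cosmetic (contrapositive framing rather than double contradiction, and a slightly overstated intermediate claim that \emph{every} entry of the second stack is at most $\pi_i$, where only the top entry needs to be).
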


\begin{proof}
Assume by way of contradiction that $\DI{C}_{\pi_i}\not\precw\DI{C}_{\pi_i+1}$ and $\DI{E}_{\pi_i+1}\lew \DI{C}_{\pi_i}$.

First suppose that $\DI{N}_{\pi_i+1} \lew \DI{C}_{\pi_i}$ and let $x$ satisfy $W(x) = \DI{C}_{\pi_i}$.
Then necessarily $\pi_i+1$ must be immediately below $\pi_i$ in the second stack after stage $x-1$.
So Step 1 of Algorithm~\ref{sorting_algorithm} applies at stage $x+1$, giving that $W(x+1)=\DI{C}_{\pi_i+1}$.
Hence $\DI{C}_{\pi_i}\precw \DI{C}_{\pi_i+1}$, a contradiction.  

Alternatively, suppose $\DI{E}_{\pi_i+1} \lew \DI{C}_{\pi_i} \lew\DI{N}_{\pi_i+1}$, and let $x$ satisfy $W(x) = \DI{E}_{\pi_i+1}$.
Then $\DI{N}_{\pi_i} \lew\DI{E}_{\pi_i+1}$
by Observation~\ref{obs:wordprops}~(\ref{obs:wordpropsNE2}).
Therefore $\DI{N}_{\pi_i} \lew\DI{E}_{\pi_i+1} \lew\DI{C}_{\pi_i}$, meaning that at stage $x$, $\pi_i+1$ is pushed into the first stack while some symbol in the second stack is smaller than $\pi_i+1$ (namely $\pi_i$).
Therefore Step 3 of Algorithm~\ref{sorting_algorithm} should not be applied at stage $x$, which is a contradiction.
\end{proof}

\begin{example}  See Figure~\ref{DI_example} and notice when entries $1,2,3$ are output, $4$ is still in the input.
This can also be seen in Example \ref{ex:adi}, where $\DI{C}_1\precw\DI{C}_2\precw\DI{C}_3\lew\DI{E}_4$.
\end{example}

We now conclude this section with a classification of when a DI word is the ADI word produced by Algorithm \ref{sorting_algorithm} for some DI-sortable permutation.

\begin{theorem}~\label{EN_before_C}
A DI word $W$ of a DI-sortable permutation $\pi$ of length $n$ is its ADI word if and only if $\DI{E}_1\precw\DI{N}_1\precw\DI{C}_1$ and for each $i\in[n]$ such that $\pi_i\neq 1$, either $\DI{C}_{\pi_i-1}\precw\DI{C}_{\pi_i}$ or $\DI{E}_{\pi_i}\precw\DI{N}_{\pi_i}\precw\DI{C}_{\pi_i}$.
That is, a DI word $W$ of a DI-sortable permutation $\pi$ is its ADI word if and only if any maximal sequence of consecutive copies of $\DI{C}$s in $W$ is immediately preceded by $\DI{EN}$.\end{theorem}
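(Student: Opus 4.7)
The plan is to prove both implications of the biconditional, with the forward direction relying on the output-greediness of Algorithm~\ref{sorting_algorithm} (Lemma~\ref{output_greedy}) and the converse using induction plus a case analysis on which step of Algorithm~\ref{sorting_algorithm} could fire.

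For the forward direction, suppose $W$ is the ADI word of $\pi$ and fix $i\in[n]$ with $v:=\pi_i$. If $v\neq 1$ and $\DI{C}_{v-1}\precw\DI{C}_v$ then the first disjunct of the condition holds automatically, so I focus on the remaining case: $v=1$, or $v\neq 1$ with $\DI{C}_{v-1}\not\precw\DI{C}_v$. In the latter subcase Lemma~\ref{output_greedy} gives $\DI{C}_{v-1}\lew\DI{E}_v$, so every value smaller than $v$ has already been output by the step $x$ at which $\DI{E}_v$ fires (the conclusion is vacuously true when $v=1$). Since the first stack is decreasing from top to bottom, any entry it contains just before step $x$ would be smaller than $v$; but those values are all already output, so the first stack is empty just before step $x$ and equals $\{v\}$ just after. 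At step $x+1$, Step 1 of Algorithm~\ref{sorting_algorithm} fails (the top of the second stack is not $v$) while Step 2 applies with $m=1$, giving $W(x+1)=\DI{N}_v$; at step $x+2$, Step 1 applies and $W(x+2)=\DI{C}_v$. Hence $\DI{E}_v\precw\DI{N}_v\precw\DI{C}_v$.

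For the converse, let $W$ be a DI word of $\pi$ satisfying the condition and let $W'$ denote the ADI word of $\pi$, which satisfies the condition by the forward direction. I would argue by contradiction: suppose $W\neq W'$, and let $x$ be the least position at which they differ. Since $W^{x-1}=(W')^{x-1}$, the state of the machine just before step $x$ is identical under either word, so Algorithm~\ref{sorting_algorithm} mandates exactly $W'(x)$. I would case on which step of Algorithm~\ref{sorting_algorithm} fires at $x$ to rule out each alternative for $W(x)$. The cleanest case is $W'(x)=\DI{C}_v$ (Step 1): if $W(x)$ is an $\DI{E}$ or $\DI{N}$ instead, then $v$ remains on top of the second stack until the eventual $\DI{C}_v=W(y)$, and in the interval $[x,y)$ no push onto the second stack is possible (it would require a value smaller than $v$, all of which are already output) and no output other than $v$ itself can fire, so $W(y-1)$ is an $\DI{E}$; but the condition forces $W(y-1)=\DI{N}_v$, since $\DI{C}_{v-1}$ lies strictly before $x$ and cannot immediately precede $\DI{C}_v$, a contradiction. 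The Step 2 case is handled similarly by applying the condition to $v=u$, the current next-expected output.

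The main obstacle is the Step 3 subcase (Algorithm wants $\DI{E}_w$ but $W(x)=\DI{N}_{w'}$) and the symmetric Step 4 subcase. In the Step 3 subcase I would apply the condition to $v=w'$. The disjunct $\DI{E}_{w'}\precw\DI{N}_{w'}\precw\DI{C}_{w'}$ forces $\DI{E}_{w'}=W(x-1)$ and $\DI{C}_{w'}=W(x+1)$; the latter requires $w'$ to be the next-expected output, and combined with the fact that all smaller values are then already output, it forces the first stack to equal $\{w'\}$ at step $x$, meaning Algorithm would have fired Step 2 rather than Step 3, contradicting $W'(x)=\DI{E}_w$. The disjunct $\DI{C}_{w'-1}\precw\DI{C}_{w'}$ places $\DI{C}_{w'-1}$ at some $z-1>x$; applying the condition recursively to each intermediate output $u,u+1,\dots,w'-1$ (with $u$ the next-expected output at step $x$) shows each forces its own $\DI{E}_v\DI{N}_v\DI{C}_v$ block, and tracking the evolution of the second stack---which must keep $w'$ below each of these values in turn as they are briefly overlaid and popped---reveals the conflict with stack 2's increasing condition. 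The bookkeeping in these final subcases, rather than any single clever idea, is the main technical effort; the Step 4 subcase is handled symmetrically.
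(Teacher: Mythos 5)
Your forward direction is correct and is essentially the paper's argument (Lemma~\ref{output_greedy} gives $\DI{C}_{\pi_i-1}\lew\DI{E}_{\pi_i}$, the decreasing condition then empties the first stack, and Steps 2 and 1 fire in succession), and your converse begins exactly as the paper does, by comparing $W$ with the ADI word at the first position of disagreement; your Step~1 case and your Disjunct-A analysis in the Step~3 case are both sound.

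The gap is in the two subcases you yourself flag as the main obstacle, and it is a missing idea rather than missing bookkeeping. In the Step~3 subcase under $\DI{C}_{w'-1}\precw\DI{C}_{w'}$, the conflict you predict with the second stack's increasing condition does not materialize: the intermediate outputs $u,u+1,\dots,w'-1$ are all \emph{smaller} than $w'$, so each may legally sit on top of $w'$ in the second stack and be popped off in turn, and no violation arises from this overlaying. The contradiction actually lives elsewhere and does not use the hypothesis on $W$ at all. One must first show that the next-expected output $u$ is still in the input at the point of divergence (it cannot be atop the second stack, else Step~1 fires; it cannot be in the first stack, else the Lemma~\ref{output_greedy} argument shows it sits there alone and Step~2 fires), whence $u$ enters after the symbol $w$ the algorithm wants to push, and Observation~\ref{obs:wordprops}~(\ref{obs:wordpropsCN}) and~(\ref{obs:wordpropsENE}) yield the impossible cycle $\DI{C}_u\lew\DI{C}_{w'}\lew\DI{N}_w\lew\DI{E}_u\lew\DI{C}_u$. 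The Step~4 subcase is not symmetric to this: there the failure of Step~3 is caused by the top $\pi_m$ of the \emph{second} stack exceeding the incoming symbol $\pi_k$, and the contradiction is the different chain $\DI{C}_u\lew\DI{C}_{\pi_m}\lew\DI{N}_{\pi_k}\lew\DI{E}_u\lew\DI{C}_u$. Your plan of applying the hypothesis recursively to $v=u,\dots,w'-1$ and watching the second stack is aimed at the wrong target and would not close either case.
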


\begin{proof}   
Suppose $W$  is the ADI word of  $\pi$ and $i\in[n]$.
It follows by Algorithm \ref{sorting_algorithm} that $\DI{E}_1\precw\DI{N}_1\precw\DI{C}_1$, so we may assume going forward that $\pi_i\neq 1$.
Additionally any references to \emph{Steps} in the following argument refer to Algorithm \ref{sorting_algorithm}.

Assume that $\DI{C}_{\pi_{i}-1}\not\precw\DI{C}_{\pi_i}$.
By Lemma \ref{output_greedy}, $\DI{C}_{\pi_i-1}\lew\DI{E}_{\pi_i}$.
Define $x$ so that $W(x)=\DI{E}_{\pi_i}$.
Therefore after stage $x-1$, $\pi_i$ is smaller than all symbols in the second stack and larger than all symbols in the first stack -- otherwise Step 3 would not be applied at stage $x$.
In fact, all symbols smaller than $\pi_i$ have been output by stage $x-1$ since $\DI{C}_{\pi_i-1}\lew\DI{E}_{\pi_i}$.
Therefore after stage $x$, $\pi_i$ is the only symbol in the first stack.
Hence Step 2 applies at stage $x+1$, giving $W(x+1)=\DI{N}_{\pi_i}$.
Then Step 1 applies at stage $x+2$ giving that $W(x+2)=\DI{C}_{\pi_i}$ and therefore $\DI{E}_{\pi_i}\precw\DI{N}_{\pi_i}\precw\DI{C}_{\pi_i}$.

Conversely, suppose the DI word $W$ is not the ADI word of $\pi$, and let $A$ denote the ADI word of $\pi$.  
Let $t\in[3n]$ be the largest integer such that $A^{t-1}=W^{t-1}$, that is $t$ is the smallest integer so that  $A(t)\neq W(t)$.
We consider the state of the DI machine after stage $t-1$.
Define $i\in[n]$ so that $\DI{C}_{\pi_i-1}\leqw W(t-1)\lew\DI{C}_{\pi_i}$ (and also $\DI{C}_{\pi_i-1}\leqw[A] W(t-1)\lew[A]\DI{C}_{\pi_i}$).
By convention, we say that $\DI{C}_0$ occurs at position 0, and since $W(t)<\DI{C}_n$, the index $i$ is well-defined. 
Therefore all symbols smaller than $\pi_i$ have been output by stage $t-1$ and no \DI{C}s appear between $W(t-1)$ and $\DI{C}_{\pi_i}$ in $W$ and $A$.
Hence by stage $t-1$, $\pi_i$ is either at 
\begin{itemize}
\item the top of the second stack ($\DI{E}_{\pi_i}\lew[] \DI{N}_{\pi_i}\leqw[] W(t-1)\lew[]\DI{C}_{\pi_i}$ in $W$ and $A$), 
\item at the bottom of the first stack ($\DI{E}_{\pi_i}\leqw[] W(t-1)\lew[] \DI{N}_{\pi_i}\lew[]\DI{C}_{\pi_i}$ in $W$ and $A$), or 
\item part of the input yet to enter the stacks ($W(t-1)\lew[]\DI{E}_{\pi_i}\lew[] \DI{N}_{\pi_i}\lew[]\DI{C}_{\pi_i}$ in $W$ and $A$).
\end{itemize}
Suppose first that $\pi_i$ is located at the top of the second stack by stage $t-1$.
See Figure \ref{fig:ADIproof1}(a).
Observe that Step 1 of Algorithm \ref{sorting_algorithm}  outputs $\pi_i$ at stage $t$, so $A(t)=\DI{C}_{\pi_i}$.
Since $\pi_i$ is smaller than all symbols in the first stack (possibly vacuously), $W(t)\neq\DI{N}$.
Thus $W(t)=\DI{E}$.
Since $\pi_i$ is smaller than any symbol in the first stack and in the remaining input, no \DI{N} can appear in $W$ between $W(t)$ and $\DI{C}_{\pi_i}$; otherwise the second stack is no longer increasing.
Therefore $\DI{E}\precw\DI{C}_{\pi_i}$, and hence neither $\DI{C}_{\pi_i-1}\precw\DI{C}_{\pi_i}$ nor $\DI{N}_{\pi_i}\precw\DI{C}_{\pi_i}$.


\begin{figure}[ht]
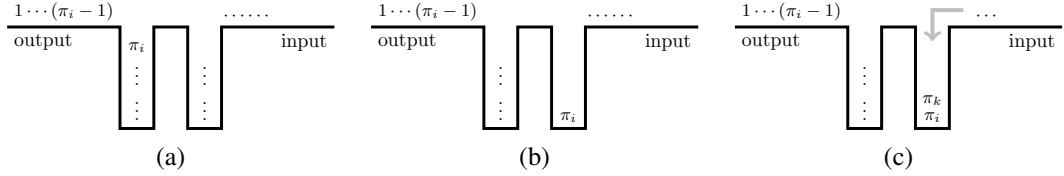

\centering
\begin{tabular}{ccc}
\pic[.75]{32} & \pic[.75]{33} & \pic[.75]{34} \\
(a) & (b) & (c) \\
\end{tabular}
\caption{Some states of the DI machine from the first two cases of the proof of Theorem~\ref{EN_before_C}. }
\label{fig:ADIproof1}
\end{figure}

Next suppose that $\pi_i$ is at the bottom of the first stack by stage $t-1$. 
Then $W(t-1)\lew[A]\DI{N}_{\pi_i}$ and hence $\DI{C}_{\pi_i-1}\leqw[A] W(t-1)\lew[A]\DI{N}_{\pi_i}\lew[A] \DI{C}_{\pi_i}$.
So $\DI{C}_{\pi_i-1}\not\precw[A]\DI{C}_{\pi_i}$, and hence $\DI{C}_{\pi_i-1}\lew[A]\DI{E}_{\pi_i}\leqw[A]W(t-1)$ by Lemma~\ref{output_greedy}.
Since $W^{t-1}=A^{t-1}$, we have that $\DI{C}_{\pi_i-1}\lew[W]\DI{E}_{\pi_i}$ as well.
Let $W(x)=\DI{E}_{\pi_i}$ (so $x\leq t-1$ and thus $A(x)=\DI{E}_{\pi_i}$).
Then Step 3 applies at stage $x$, meaning that $\pi_i$ is larger than all symbols in the first stack by stage $x-1$, but since $\DI{C}_{\pi_i-1}\lew[A]\DI{E}_{\pi_i}$ all symbols smaller than $\pi_i$ have been output.
So $\pi_i$ is alone in the first stack after stage $x$.
Since $\pi_i$ is smaller than any symbol (if any) in the second stack, it follows that $A(x)=\DI{E}_{\pi_i}\precw[A]\DI{N}_{\pi_i}\precw[A]\DI{C}_{\pi_i}$.
So $A(x+1)=\DI{N}_{\pi_i}$.
Since $A(t-1)\lew[A] \DI{N}_{\pi_i} = A(x+1)$, 
$t-1 < x+1$.
So $x=t-1$ and hence $A(t)=\DI{N}_{\pi_i}$ and $A(t-1)=W(t-1)=\DI{E}_{\pi_i}$.
Observe that $\pi_i$ is alone in the first stack by stage $t-1$ and 
$W(t)\neq\DI{C}$ (because $\DI{C}_{\pi_i-1}\leqw W(t-1) = \DI{E}_{\pi_i} \precw W(t)\lew\DI{C}_{\pi_i}$), 
so $W(t)=\DI{E}_{\pi_k}$ for some $\pi_k>\pi_i$.
Figures \ref{fig:ADIproof1} (b) and (c) show the states of the DI-machine using $W$ on $\pi$ after stages $t-1$ and $t$, respectively.
%

Since $\DI{C}_{\pi_i-1}\leqw W(t-1) = \DI{E}_{\pi_i}$, no \DI{C}s appear between $\DI{E}_{\pi_i}$ and $\DI{C}_{\pi_i}$ in $W$.
Hence $\DI{C}\not\precw\DI{N}_{\pi_i}$ and there are no \DI{C}s between $\DI{N}_{\pi_i}$ and $\DI{C}_{\pi_i}$.
Furthermore $\DI{E}\not\precw\DI{N}_{\pi_i}$ since $\DI{E}_{\pi_i}\not\precw\DI{N}_{\pi_i}$ and otherwise $W$ is not a sorting word.
Therefore $\DI{N}\precw \DI{N}_{\pi_i}$.
Furthermore, there are no \DI{N}s between $\DI{N}_{\pi_i}$ and $\DI{C}_{\pi_i}$, otherwise Observation~\ref{obs:wordprops}~(\ref{obs:wordpropsNNCCC}) is violated.
So either $\DI{N}\precw\DI{N}_{\pi_i}\precw\DI{C}_{\pi_i}$ or $\DI{E}\precw\DI{C}_{\pi_i}$.

Last, we show that $\pi_i$ must be in one of the stacks by stage $t-1$, which will conclude the proof.
Assume to the contrary that $\pi_i$ is not in either stack by stage $t-1$.
Clearly both $A(t)$ and $W(t)$ are not $\DI{C}$ (necessarily $\DI{C}_{\pi_i}$), so either
$A(t)=\DI{N}$ and $W(t)=\DI{E}$ or $A(t)=\DI{E}$ and $W(t)=\DI{N}$.
Suppose at stage $t-1$, $\pi_k$ is the next symbol to enter the stacks, and if the stacks are nonempty, $\pi_q$ sits atop the first stack and $\pi_m$ sits atop the second stack.
See Figure \ref{fig:ADIproof2}(a).
\begin{figure}[ht]
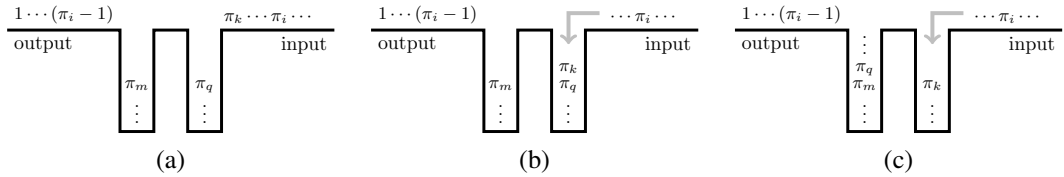

\centering
\vspace{-6pt}
\begin{tabular}{ccc}
\pic[.75]{35} & \pic[.75]{36} & \pic[.75]{37} \\
(a) & (b) & (c) \\
\end{tabular}
\caption{Some stages of the DI machine from the final cases of the proof of Theorem~\ref{EN_before_C}.  }
\label{fig:ADIproof2}
\end{figure}

First assume that $A(t)=\DI{N}_{\pi_q}$ and $W(t)=\DI{E}_{\pi_k}$ (and therefore the first stack is nonempty by stage $t-1$).
Then either Step 2 or Step 4 of Algorithm \ref{sorting_algorithm} applies at stage $t$.
However Step 2 cannot be satisfied because $\pi_i$ is the next symbol to be output, so Step 4 is applied, meaning the conditions of Step 3 are not satisfied. Hence either the second stack is nonempty and $\pi_k > \pi_m$, or $\pi_k < \pi_q$.
Since $W$ is a DI word and $W(t)=\DI{E}_{\pi_k}$, it follows that $\pi_k > \pi_q$ (and thus $\pi_k\neq \pi_i)$, which implies that the second stack must be nonempty and $\pi_k>\pi_m$; hence $\DI{N}_{\pi_m}\lew\DI{E}_{\pi_k}$.
Since $\DI{E}_{\pi_k}\lew\DI{E}_{\pi_i}$ we have that 
 $W(t) =\DI{E}_{\pi_k} \lew \DI{N}_{\pi_k} \lew\DI{E}_{\pi_i}$ by Observation~\ref{obs:wordprops}~(\ref{obs:wordpropsENE}).

So $\DI{C}_{\pi_i} \lew\DI{C}_{\pi_m}$ (since $\pi_i<\pi_m$), 
$\DI{C}_{\pi_m}\lew\DI{N}_{\pi_k}$ (follows from $\DI{N}_{\pi_m}\lew\DI{E}_{\pi_k}$, $\pi_m<\pi_k$, and Observation~\ref{obs:wordprops}~(\ref{obs:wordpropsCN})), and $\DI{N}_{\pi_k}\lew\DI{E}_{\pi_i}$ (because $\pi_i<\pi_k$, $\DI{E}_{\pi_k}\lew\DI{E}_{\pi_i}$, and Observation ~\ref{obs:wordprops}~(\ref{obs:wordpropsENE})).
Therefore $\DI{C}_{\pi_i}\lew \DI{E}_{\pi_i}$, a contradiction.
Hence we cannot have $A(t)=\DI{N}$ and $W(t)=\DI{E}$.

\newpage
Therefore $A(t)=\DI{E}_{\pi_k}$ and $W(t)=\DI{N}_{\pi_q}$.
This implies that $\pi_k > \pi_q$ (and thus $\pi_k\neq\pi_i$) and the first stack is nonempty at stage $t-1$, respectively.
This also gives that $W(t)=\DI{N}_{\pi_q}\lew\DI{E}_{\pi_k}$.
Furthermore $\pi_q>\pi_i$ since all other symbols smaller than $\pi_i$ have been output by stage $t-1$.

So $\DI{C}_{\pi_q}\lew\DI{N}_{\pi_k}$ because from $\DI{N}_{\pi_q}\lew\DI{E}_{\pi_k}$, $\pi_q<\pi_k$, and Observation~\ref{obs:wordprops}~(\ref{obs:wordpropsCN}). 
Furthermore $\DI{N}_{\pi_k}\lew\DI{E}_{\pi_i}$ since $\pi_i<\pi_k$, $\DI{E}_{\pi_k}\lew\DI{E}_{\pi_i}$, and Observation ~\ref{obs:wordprops}~(\ref{obs:wordpropsENE}). 
Since $\pi_i<\pi_q$, it follows that $\DI{C}_{\pi_i}\lew\DI{C}_{\pi_q}$.
So $\DI{C}_{\pi_q}\lew\DI{N}_{\pi_k}\lew\DI{E}_{\pi_i}\lew\DI{C}_{\pi_i}\lew\DI{C}_{\pi_q}$, which is a contradiction.
\end{proof}

We can now strengthen Theorem~\ref{EN_before_C}.
This result will be used in an argument in Section \ref{sec:properties}, specifically the proof of Theorem~\ref{thm:decomp}.

\begin{corollary}~\label{greedy}  
Consider the ADI word $W$ of a DI-sortable permutation $\pi$. 
%
Let $x\in[3n-1]$ and $\pi_i = \min\{\pi_k\mid W(x)\lew\DI{C}_{\pi_k}\}$.
If $\DI{E}_{\pi_i}\lew W(x)$, then $W(x)\neq \DI{E}$.
That is, Algorithm~\ref{sorting_algorithm} will output all available entries in the stacks before another push from the input is allowed.
\end{corollary}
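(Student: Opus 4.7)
The plan is to derive Corollary~\ref{greedy} directly from Theorem~\ref{EN_before_C} via a short argument by contradiction. I would assume $W(x) = \DI{E}$ and use the structural restriction on how maximal blocks of $\DI{C}$s must be preceded in any ADI word to squeeze $W(x)$ into a position that cannot possibly hold an $\DI{E}$.

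The first step is to unpack the positional constraints on $W(x)$. By the minimality of $\pi_i$, we have $W(x) \lew \DI{C}_{\pi_i}$, and when $\pi_i > 1$ the same minimality together with $W(x) = \DI{E} \neq \DI{C}_{\pi_i - 1}$ gives $\DI{C}_{\pi_i - 1} \lew W(x)$. Combined with the hypothesis $\DI{E}_{\pi_i} \lew W(x)$, these inequalities pin $W(x)$ strictly between $\DI{C}_{\pi_i - 1}$ (or $\DI{E}_{\pi_i}$) and $\DI{C}_{\pi_i}$.

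The next step is to apply Theorem~\ref{EN_before_C} to $\pi_i$, which leaves two alternatives. If $\DI{C}_{\pi_i - 1} \precw \DI{C}_{\pi_i}$, then there is no letter strictly between these two $\DI{C}$s, which contradicts the window just established. If instead $\DI{E}_{\pi_i} \precw \DI{N}_{\pi_i} \precw \DI{C}_{\pi_i}$, the only letter strictly between $\DI{E}_{\pi_i}$ and $\DI{C}_{\pi_i}$ is $\DI{N}_{\pi_i}$, forcing $W(x) = \DI{N}_{\pi_i}$ and contradicting $W(x) = \DI{E}$. The boundary case $\pi_i = 1$ collapses to the second alternative, which Theorem~\ref{EN_before_C} guarantees, and the same squeeze yields $W(x) = \DI{N}_1$.

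I do not expect a substantive obstacle; the only point requiring real care is verifying that the inequalities $\DI{E}_{\pi_i} \lew W(x)$ and $W(x) \lew \DI{C}_{\pi_i}$ are genuinely strict (so that ``no room for $W(x)$'' is a true contradiction rather than a vacuous statement), which follows from the convention that $\lew$ denotes strict precedence together with the assumption $W(x) = \DI{E}$.
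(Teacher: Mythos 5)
Your proposal is correct and follows essentially the same route as the paper's proof: both pin $W(x)$ between $\DI{C}_{\pi_i-1}$ and $\DI{C}_{\pi_i}$ using the minimality of $\pi_i$, then invoke Theorem~\ref{EN_before_C} to force $W(x)=\DI{N}_{\pi_i}$ in the nontrivial case. The only cosmetic difference is that you phrase it as a contradiction while the paper argues directly by cases, and you handle the $\pi_i=1$ boundary explicitly where the paper leaves it implicit.
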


\begin{proof}
If $W(x)=\DI{C}_{\pi_i-1}$, then clearly $W(x)\neq\DI{E}$.
If $\DI{C}_{\pi_i-1}\lew W(x)\lew \DI{C}_{\pi_i}$, then by Theorem~\ref{EN_before_C}, $\DI{E}_{\pi_i}\precw \DI{N}_{\pi_i}\precw \DI{C}_{\pi_i}$.
So $W(x)=\DI{N}_{\pi_i}\neq \DI{E}$.
\end{proof}

%
%

\section{The Bijection}
\label{sec:bijection}
In this section we give a bijection that takes DI-sortable permutations to their corresponding Schr\"oder paths via their ADI words.  
Let $\pi$ be a DI-sortable permutation of length $n$ with corresponding ADI word $W$.
To produce a Schr\"oder path, we use the following constructions.
Suppose that $W$ contains $k$ maximal, consecutive substrings consisting only of \DI{C}s.
Let $\tau=(\tau_1,\tau_2,\dots,\tau_k)$ be the partition of $n$ such that $\tau_i$ is the length of the $i$th such substring.
Define a $k$-tuple $\ell$ so that $\ell_1=\tau_1$ and $\ell_i = \tau_i+\ell_{i-1}$, 
when $2\leq i \leq k$.  Note that $\ell_k =n$ since it counts all the \DI{C}s of $W$.
Finally, define an increasing $k$-tuple $\rho$ so that for $x\in[3n]$ we have $\DI{E}\precw W(x)\precw\DI{C}$ if and only if $W(x)=\DI{N}$ and $\#_{\DI{N}}(W^x) = \rho_i$.
Note that $\rho$ is well-defined by Theorem \ref{EN_before_C}.
In other words, we have:
\begin{itemize}
\item $\ell_i$ is the number of \DI{C}s  in the first $i$ maximal subsequences of \DI{C}s in $W$.
\item $\rho_i$ is the number of \DI{N}s in $W$ before the $i\th$ maximal subsequence of \DI{C}s.
By Theorem \ref{EN_before_C}, the $\rho_i\th$ \DI{N} in $W$ is preceded by an \DI{E} and succeeded by a \DI{C}.
\end{itemize}



\begin{example}
\label{ex:rho_tau}
Let $\pi=81736245$.
Then $W = \DI{EN\,\textcolor{red}{ENC}\,ENEENN\,\textcolor{red}{ENCC}\,\textcolor{red}{ENC}\,\textcolor{red}{ENCCCC}}$, which contains $k=4$ maximal subsequences of consecutive \DI{C}s having lengths $1$, $2$, $1$, and $4$, respectively.
So  $\tau = (1,2,1,4)$ and hence $\ell = (1,3,4,8)$, and $\rho = (2,6,7,8)$.
The maximal substrings of the form $\DI{ENC}^{\tau_i}$ are given in red.
\end{example}

Using this, we give an algorithm for producing a Schr\"oder path from a DI-sortable permutation.	

\begin{algorithm}~\label{ADI_word_to_path}
Let $\pi$ be a DI-sortable permutation of length $n$.
\begin{enumerate}
\item
Use Algorithm~\ref{sorting_algorithm} to find the ADI word $W$ of $\pi$.  From $W$, obtain $\tau$ and $\ell$. 
\item 
Construct a word $T$ by replacing each maximal consecutive substring in $W$ of the form \DI{ENC}$^a$ with an~\DI{N}.
Observe that $\DI{N}$ is the last letter of $T$, and $\#_{\DI{N}}(T)=n$.
\item 
Construct a word $S_\DI{D}$ from $T$ by replacing the $\ell_i$\textsuperscript{th} \DI{N} in $T$ with a \DI{D} for each $i\in[k]$.
Observe that since $\ell_k=n$, the final character in this word is \DI{D}.  
Remove this \DI{D} to produce $S$.
\end{enumerate}
\end{algorithm}

\begin{example}  
\label{ex:reverse}
Let $\pi = 81736245$ as given in Example \ref{ex:rho_tau}, in which we give $W$, $\tau$, and $\ell$.
We now apply Algorithm~\ref{ADI_word_to_path} to $\pi$.
We first produce $T= \DI{E\textcolor{blue}{N}NE\textcolor{blue}{N}EE\textcolor{blue}{N}NNN\textcolor{blue}{N}}$ by replacing the maximal consecutive substrings \DI{ENC}$^{\tau_i}$ ($1\leq i \leq 4$) with an \DI{N}.

Next, we replace the $\ell_i$\textsuperscript{th} \DI{N}s in $T$ with \DI{D} (in blue) to produce $S_\DI{D}=\DI{EDNEDEEDNNND}$, then remove the final \DI{D} to produce $S = \DI{EDNEDEEDNNN}$.
See Figure~\ref{81736245}.
\end{example}

\begin{figure}[ht]
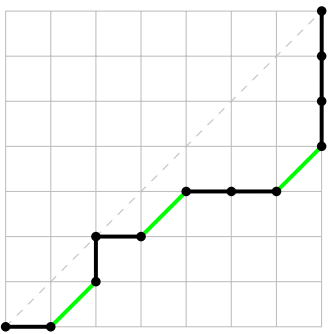

\centering
\pic{1}
\caption{The Schr\"oder path \DI{EDNEDEEDNNN} corresponding to $\pi=81736245$.}
\label{81736245}
\end{figure}

We now give an observation, then prove two lemmas which collectively show that Algorithm \ref{ADI_word_to_path} provides a one-to-one correspondence between ADI words and Schr\"oder paths.

\begin{observation}
\label{obs:rho_tau}
Let $W$ be an ADI word for a permutation $\pi$ of length $n$ with $k$ substrings of the form $\DI{ENC}^{\tau_i}$ and $k$-tuples $\rho$ and $\ell$ as defined earlier, and let $\ell_0=0$.
Observe that $\DI{C}_m$ is the $m$th occurrence of \DI{C} in $W$.
Let $i\in[k]$ and  $\hat{x},x \in[3n]$ such that  $W(\hat{x})\precw\DI{C}_{\ell_{i-1}+1}$ and $W(x)=\DI{C}_{\ell_i}$; in other words $W(\hat{x})$ is the \DI{N} which immediately precedes the $i$th maximal subsequence of \DI{C}s and $W(x)$ is the last \DI{C} in the $i$th maximal substring of \DI{C}s.
See Figure \ref{fig:rho_tau}.
Then, $\ell_i=\#_\DI{C}(W^x) \leq \#_\DI{N}(W^x) = \#_\DI{N}(W^{\hat{x}}) = \rho_i$.
\end{observation}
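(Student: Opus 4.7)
The plan is to verify each of the four claims in the chain
$\ell_i=\#_\DI{C}(W^x) \leq \#_\DI{N}(W^x) = \#_\DI{N}(W^{\hat{x}}) = \rho_i$
by unpacking the relevant definitions and invoking one previously stated result for each link. The observation is essentially a bookkeeping statement, so I would make the argument a short chain of equalities and inequalities rather than a substantive combinatorial argument.

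First, I would justify $\ell_i=\#_\DI{C}(W^x)$. By definition $\ell_i=\tau_1+\cdots+\tau_i$ counts the $\DI{C}$s in the first $i$ maximal $\DI{C}$-substrings of $W$, and $W(x)$ is by hypothesis the last $\DI{C}$ of the $i$th such substring. Since the $\DI{C}$s appear in $W$ in the order of their indices (by Observation~\ref{obs:wordprops}~(\ref{obs:wordpropsCC})), $\DI{C}_{\ell_i}$ is simply the $\ell_i$th $\DI{C}$ in $W$, so counting $\DI{C}$s through position $x$ gives exactly $\ell_i$.

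Next, the inequality $\#_\DI{C}(W^x)\leq \#_\DI{N}(W^x)$ is the prefix condition from Observation~\ref{observation}, applied to the sorting word $W$ at position $x$. For the equality $\#_\DI{N}(W^x)=\#_\DI{N}(W^{\hat{x}})$, I would observe that the positions $\hat{x}+1,\hat{x}+2,\dots,x$ correspond precisely to the $i$th maximal $\DI{C}$-substring (from $\DI{C}_{\ell_{i-1}+1}$ through $\DI{C}_{\ell_i}$), so no new $\DI{N}$s are added to the count in passing from $W^{\hat{x}}$ to $W^x$.

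The last step is $\#_\DI{N}(W^{\hat{x}})=\rho_i$, which is where Theorem~\ref{EN_before_C} does the only real work: it guarantees that every maximal $\DI{C}$-substring in $W$ is immediately preceded by $\DI{EN}$, so the character $W(\hat{x})$ is an $\DI{N}$ that satisfies $\DI{E}\precw W(\hat{x})\precw\DI{C}$, and conversely every $\DI{N}$ with that property sits immediately before the start of some maximal $\DI{C}$-substring. Thus $\rho$ is in bijection with the maximal $\DI{C}$-substrings, and by the increasing convention $\rho_i$ is the $\DI{N}$-rank of the $\DI{N}$ preceding the $i$th such substring, which is exactly $\#_\DI{N}(W^{\hat{x}})$. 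The only potential pitfall I see is being careful that $\rho$ indexes precisely these distinguished $\DI{N}$s and no others, which I would address by quoting Theorem~\ref{EN_before_C} for the ``only these'' direction.
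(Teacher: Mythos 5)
Your proof is correct and follows the same reasoning the paper intends: the paper states this as an Observation with no separate proof, treating the chain as immediate from the definitions of $\ell$ and $\rho$, the prefix condition of Observation~\ref{observation}, and Theorem~\ref{EN_before_C}. Your careful unpacking of each link, including the use of Theorem~\ref{EN_before_C} to identify the distinguished $\DI{N}$s with the maximal $\DI{C}$-substrings, is exactly the intended argument.
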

Observe that in Example \ref{ex:rho_tau}, $\ell_i \leq \rho_i$ for each $i\in[4]$.
\begin{figure}[h]
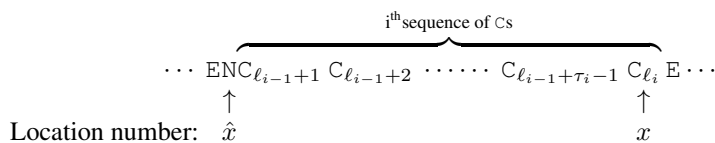

\centering
\begin{tabular}[t]{r@{}}
$\cdots$\\
\phantom{$\uparrow$}\\
Location number:
\end{tabular}
\DI{E}\begin{tabular}[t]{@{}c@{}}\DI{N}%
\\$\uparrow$\\$\hat{x}$\end{tabular}$
\overbrace{\DI{C}_{\ell_{i-1}+1}\ \DI{C}_{\ell_{i-1}+2}\ \cdots\cdots\ \DI{C}_{\ell_{i-1}+\tau_i-1}\ \begin{array}[t]{@{}c@{}}\DI{C}_{\ell_i}\\\uparrow\\x\end{array}}^{\text{i\th sequence of $\DI{C}$s}}\DI{E}\cdots$
\caption{An illustration of the indexing used for a word $W$ in Observation \ref{obs:rho_tau}.}
\label{fig:rho_tau}
\end{figure}

\begin{lemma}~\label{forward}
The ADI word for a DI-sortable permutation of length $n>0$ yields a Schr\"oder path from $(0,0)$ to $(n-1,n-1)$ via Algorithm~\ref{ADI_word_to_path}.
\end{lemma}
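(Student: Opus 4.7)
My plan is to verify the two defining features of a Schr\"oder path: that $S$ ends at $(n-1,n-1)$ and that no prefix of $S$ rises above the diagonal. For the endpoint, I would do a direct letter-count. The word $W$ has length $3n$ with $n$ copies each of $\DI{E}, \DI{N}, \DI{C}$, and each of the $k$ maximal $\DI{ENC}^{\tau_i}$ substrings collapses to a single $\DI{N}$ in $T$, so $T$ has $n-k$ \DI{E}s and $n$ \DI{N}s. Since $W$ ends in $\DI{C}_n$ and Theorem~\ref{EN_before_C} forces the last maximal block of \DI{C}s to be preceded by $\DI{EN}$, the last letter of $T$ is the $n$th $\DI{N}$; this becomes a $\DI{D}$ in $S_\DI{D}$ and is stripped off in $S$. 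Thus $S$ has $n-k$ \DI{E}s, $n-k$ \DI{N}s, and $k-1$ \DI{D}s, which places the endpoint at $(n-1,n-1)$.

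For the diagonal condition, note that after reading a length-$p$ prefix of $S_\DI{D}$ the path sits at $(x_p,y_p)$ with $y_p=\#_\DI{N}(T^p)$ and $x_p=\#_\DI{E}(T^p)+\#_\DI{D}(S_\DI{D}^p)$, so the inequality $y_p\leq x_p$ is equivalent to
\[
\#_\DI{N}(T^p)-\#_\DI{E}(T^p) \;\leq\; \#_\DI{D}(S_\DI{D}^p).
\]
To control the left-hand side I would pull back through the collapsing step: define $q$ to be the smallest index in $W$ such that $W^q$ contains every letter of $W$ that contributes to $T^p$ (so $q$ is the position of the last letter of $T^p$ in $W$ when that letter is directly inherited, or the position of the last $\DI{C}$ of the corresponding $\DI{ENC}^{\tau_j}$ substring when that letter was inserted by collapsing). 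Letting $j$ denote the number of $\DI{ENC}^{\tau_i}$ substrings that lie entirely in $W^q$, the collapsing rule yields $\#_\DI{E}(T^p)=\#_\DI{E}(W^q)-j$ and $\#_\DI{N}(T^p)=\#_\DI{N}(W^q)$, and Observation~\ref{observation} then gives $\#_\DI{N}(T^p)-\#_\DI{E}(T^p)\leq j$.

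For the right-hand side, $\#_\DI{D}(S_\DI{D}^p)$ counts the indices $i$ with $\ell_i\leq\#_\DI{N}(T^p)=\#_\DI{N}(W^q)$. For each $i\leq j$ the $i$th $\DI{ENC}^{\tau_i}$ substring lies in $W^q$, so the position $\hat{x}_i$ of its internal $\DI{N}$ satisfies $\hat{x}_i\leq q$, and hence $\rho_i=\#_\DI{N}(W^{\hat{x}_i})\leq\#_\DI{N}(W^q)$; combined with the bound $\ell_i\leq\rho_i$ from Observation~\ref{obs:rho_tau}, this gives $\ell_i\leq\#_\DI{N}(W^q)$ for all $i\leq j$, so $\#_\DI{D}(S_\DI{D}^p)\geq j$. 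Chaining the two estimates produces the required diagonal inequality. The main obstacle I anticipate is the bookkeeping around the two cases for the endpoint of $T^p$ (inherited letter versus inserted $\DI{N}$) when verifying the counting identities between $T^p$ and $W^q$; once those identities are carefully justified in both cases, both the endpoint computation and the diagonal inequality follow cleanly from Observations~\ref{observation} and~\ref{obs:rho_tau}.
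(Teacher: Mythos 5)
Your proposal is correct and follows essentially the same route as the paper's proof: a direct letter count for the endpoint, and for the sub-diagonal condition a pull-back of prefix counts from $S_\DI{D}$ to $T$ to $W$, combining the sorting-word inequality $\#_\DI{N}(W^q)\leq\#_\DI{E}(W^q)$ from Observation~\ref{observation} with the bound $\ell_i\leq\rho_i$ from Observation~\ref{obs:rho_tau}. The only cosmetic difference is that the paper checks only the prefixes ending in an $\DI{N}$ of $S$ and locates the relevant block index via $\rho_{i-1}<\#_\DI{N}(T^x)\leq\rho_i$, whereas you check every prefix and count the completed $\DI{ENC}^{\tau_i}$ blocks directly; the resulting inequality chains are the same.
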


\begin{proof}  
Let $W$ be the ADI word for a DI-sortable permutation $\pi$ of length $n$.
Suppose that $W$ contains $k$ disjoint maximal consecutive substrings of \DI{C}s.
Let $T$, $S_\DI{D}$, and $S$ be the words produced in Algorithm~\ref{ADI_word_to_path}.
Then the length of $T$ is $2n-k$ with $\#_\DI{N}(T) = n$ and $\#_\DI{E}(T)=n-k$.
Therefore the length of $S$ is $2n-k-1$ with $\#_\DI{N}(S) = \#_\DI{E}(S)=n-k$ and $\#_\DI{D}(S)=k-1$.
It follows that $S$ corresponds to a path from $(0,0)$ to $(n-1,n-1)$.

Let $x\in[2n-k-1]$ and suppose that $S(x)=\DI{N}$.
It is sufficient to show that $\#_\DI{E}(S^x) \geq \#_\DI{N}(S^x)$.
We have $T(x)=\DI{N}$.
Let $i\in[n]$ such that $\rho_{i-1} < \#_\DI{N}(T^x) \leq \rho_i$, with the convention that $\ell_0=0, \rho_0=0$.
Since $\ell_{i-1}\leq \rho_{i-1}$ and 
 in producing $S$ we replace the $\ell_j$\textsuperscript{th} \DI{N} in $T$ with a \DI{D}
whenever $1 \leq j < k$, we have $\#_\DI{D}(S^x) \geq i-1$.  
Let $\hat{x}\in[3n]$ so that $W(\hat{x})=\DI{N}$ and corresponds to the \DI{N} at $T(x)$.
Then $\#_\DI{N}(T^x)=\#_\DI{N}(W^{\hat{x}})$.
Furthermore $\DI{C}_{\ell_{i-1}}\lew W(\hat{x}) \lew \DI{C}_{\ell_{i-1}+1}$, and since, in producing $T$, we remove $i-1$ instances of $\DI{E}$ that precede the $\DI{N}$ at location $\hat{x}$ in $W$, it follows that $\#_\DI{E}(W^{\hat{x}}) = \#_\DI{E}(T^x)+(i-1)$.
In addition, observe that $\#_\DI{E}(S^x) = \#_\DI{E}(T^x)$ and $\#_\DI{N}(T^x) = \#_\DI{N}(S^x)+\#_\DI{D}(S^x)$.
So  
\[\begin{array}{rcl}
\#_\DI{E}(S^x) &=& \#_\DI{E}(T^x) = \#_\DI{E}(W^{\hat{x}})-(i-1) \\
&\geq& \#_\DI{N}(W^{\hat{x}})-(i-1) = \#_\DI{N}(T^x) - (i-1) = \#_\DI{N}(S^x)+\#_\DI{D}(S^x)-(i-1) \geq \#_\DI{N}(S^x).
\end{array}\]
Hence $S$ is a Schr\"oder path from $(0,0)$ to $(n-1,n-1)$.
\end{proof}

\begin{lemma}~\label{back}
For every Schr\"oder path $S$ from $(0, 0)$ to $(n -1, n-1)$, there is an ADI word $W$ of a DI-sortable permutation of length $n$ such that Algorithm~\ref{ADI_word_to_path} applied to $W$ gives $S$.
\end{lemma}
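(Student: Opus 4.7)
My plan is to exhibit an explicit inverse to Algorithm~\ref{ADI_word_to_path}. Given a Schr\"oder path $S$ from $(0,0)$ to $(n-1,n-1)$, first append a \DI{D} to obtain $S_\DI{D}$, read off $k$, $\tau=(\tau_1,\dots,\tau_k)$, and $\ell=(\ell_1,\dots,\ell_k)$ from the positions of the \DI{D}s exactly as in the forward direction, and form $T$ from $S_\DI{D}$ by replacing each \DI{D} with an \DI{N}. The word $W$ will then be built from $T$ by selecting $k$ of its $n$ \DI{N}'s and replacing the $i$th selected \DI{N} by the block \DI{ENC}$^{\tau_i}$.

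The main subtlety is identifying which $k$ of the \DI{N}'s in $T$ to expand: not every choice yields an ADI word, as one sees already for $S=\DI{EDN}$, where the naive replacement of each \DI{D} by a block would violate the decreasing condition of the first stack. I plan to use the greedy behavior of Algorithm~\ref{sorting_algorithm} (codified in Corollary~\ref{greedy}). If $\rho_i$ denotes the index, within $T$'s \DI{N}-sequence, of the $i$th expanded \DI{N}, then $\rho_i$ should be the least integer with $\rho_i > \rho_{i-1}$ and $\rho_i \ge \ell_i$ such that the partial DI simulation consistent with the first $\rho_i$ \DI{N}'s of $T$ leaves the top $\tau_i$ entries of the second stack equal to $\ell_{i-1}+1, \dots, \ell_i$, in that order.

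With $W$ defined this way, four things remain to be verified: (a) $W$ has length $3n$ with $n$ copies of each of \DI{E}, \DI{N}, \DI{C}, which is immediate from counting; (b) $W$ satisfies the prefix condition of Observation~\ref{observation}, which follows from the Schr\"oder inequality $\#_\DI{N}(S_\DI{D}^y) \le \#_\DI{E}(S_\DI{D}^y)$ by tracking how each \DI{ENC}$^{\tau_i}$ expansion affects the running counts and using $p \le \tau_i$ within each \DI{C}-run; (c) every maximal \DI{C}-run in $W$ is immediately preceded by \DI{EN} by the block structure, so the structural half of Theorem~\ref{EN_before_C} holds; and (d) $W$ is a DI word of the permutation $\pi$ it sorts. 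Once (a)--(d) are in hand, Theorem~\ref{EN_before_C} identifies $W$ as the ADI word of $\pi$, and verifying that Algorithm~\ref{ADI_word_to_path} applied to $W$ recovers $S$ is routine: Step~2 collapses each expansion back to an \DI{N} in $T$, the \DI{C}-run lengths reproduce $\tau$, and Step~3 replaces the $\ell_i$th \DI{N} of $T$ with \DI{D}, reproducing $S_\DI{D}$ and hence $S$.

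The main obstacle is (d). Using Lemma~\ref{sorting_req}, this reduces to showing that the first stack is empty just before each block's \DI{E} and that the second stack's top is configured correctly at each block's \DI{N}. The natural approach is induction on $k$: analyze the \DI{E}'s and singleton \DI{N}'s occurring between blocks $i-1$ and $i$ in $W$, and show that the greedy choice of $\rho_i$ forces them to push precisely $\ell_i, \ell_i-1, \dots, \ell_{i-1}+2$ onto the second stack in that order, emptying the first stack by the time block $i$ starts. The Schr\"oder inequality for $S$ together with Observation~\ref{obs:rho_tau} ($\ell_i \le \rho_i$) supplies the combinatorial bookkeeping needed to push the induction through.
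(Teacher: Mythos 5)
Your architecture matches the paper's (append \DI{D}, extract $\ell$ and $\tau$, pass to $T$, expand $k$ selected \DI{N}s into blocks \DI{ENC}$^{\tau_i}$, then invoke Observation~\ref{observation}, Lemma~\ref{sorting_req} and Theorem~\ref{EN_before_C}), and you correctly identify that the whole difficulty is choosing \emph{which} \DI{N}s to expand. But your selection rule is where the gap lies. You define $\rho_i$ as the least index for which ``the partial DI simulation consistent with the first $\rho_i$ \DI{N}'s of $T$ leaves the top $\tau_i$ entries of the second stack equal to $\ell_{i-1}+1,\dots,\ell_i$.'' This is circular as stated: the values of the entries in the machine are determined by the order in which they are eventually output, i.e.\ by the positions of the \DI{C}s, which are exactly what you are in the process of placing; until $W$ is fully built there is no permutation to simulate. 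Worse, for any valid sorting word the top entries of the second stack just before a \DI{C}-run are \emph{automatically} the next values to be output, so your stated condition does not actually capture the real constraint, which is that the \emph{first} stack be empty at each block's \DI{N} (Lemma~\ref{sorting_req}). The verifications you defer to ``bookkeeping'' and ``an induction that pushes through'' --- well-definedness and existence of $\rho_i$, the prefix inequalities of Observation~\ref{observation}, and first-stack emptiness --- are precisely the content of the proof.

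The missing idea is a purely combinatorial characterization of the insertion points. The paper sets $r_i=\min\{x\mid \#_\DI{N}(T^x)=\#_\DI{E}(T^x)+i\}$: the first prefix of $T$ with exactly $i$ more \DI{N}s than \DI{E}s. This choice is forced, because first-stack emptiness at the \DI{N} of block $i$ means $\#_\DI{E}=\#_\DI{N}$ there, which after deleting the $i$ inserted \DI{E}s reads $\#_\DI{N}(T^{r_i})=\#_\DI{E}(T^{r_i})+i$, and the sorting-word inequality $\#_\DI{N}\le\#_\DI{E}$ forces one to take the first such position. With this definition, existence, stack emptiness, and $\#_\DI{N}(\hat{T}^x)\le\#_\DI{E}(\hat{T}^x)$ all drop out of short counting arguments, and the remaining inequality $\#_\DI{C}(W^x)\le\#_\DI{N}(W^x)$ comes from proving $r_i\ge t_i$ (the position of the $i$th \DI{D} in $S_\DI{D}$) using the fact that $S$ stays weakly below the diagonal. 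Your constraint ``$\rho_i\ge\ell_i$'' is the shadow of this last step, but you impose it rather than derive it, and you never check that imposing it is compatible with the rest of your rule. Replace the simulation-based rule by the counting rule and the rest of your outline goes through essentially as the paper's proof does.
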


\begin{proof}  We begin with a Schr\"oder path from $(0,0)$ to $(n-1,n-1)$ and consider it as a word $S$.  
Let $k=\#_\DI{D}(S) +1$.
Then $\#_\DI{E}(S) = \#_\DI{N}(S) = n-k$.
Furthermore, we have $\#_\DI{E}(S') \geq \#_\DI{N}(S')$ for any prefix $S'$ of $S$, since the Schr\"oder path stays weakly below the main diagonal.
Following Algorithm~\ref{ADI_word_to_path} backwards, append a \DI{D} to the end of $S$ to create the ``extended'' Schr\"oder path $S_{\DI{D}}$.  

For each $i\in[k]$, let $t_i$ be the location of the $i\th$ \DI{D} in $S_\DI{D}$, and let $\ell_i$ be the location of the $i\th$ \DI{D} in the substring of $S_{\DI{D}}$ consisting of all of the \DI{N}s and \DI{D}s of $S_\DI{D}$.
Since the substring of $S_{\DI{D}}$ consisting of all of the \DI{N}s and \DI{D}s of $S_\DI{D}$ has exactly $n$ letters, observe that $\ell_k=n$ and for convention, we set $\ell_0=0$.

Next create $T$ by replacing each \DI{D} with an \DI{N} in $S_{\DI{D}}$.  
Then $\#_\DI{N}(T) = n = k + \#_\DI{E}(T)$, and
observe that if $t_i \leq x < t_{i+1}$ for some $i\in[k-1]$, then 
$\#_\DI{D}(S_\DI{D}^x) = i$ and  
$\#_\DI{N}(S_\DI{D}^x) \leq \#_\DI{E}(S_\DI{D}^x) = \#_\DI{E}(T^x)$. 
So 
\[\#_\DI{N}(T^x) =\#_\DI{N}(S_\DI{D}^x) + \#_\DI{D}(S_\DI{D}^x) \leq \#_\DI{E}(T^x)+i.\]

Define $r_i=\min\{ x\mid \#_\DI{N}(T^x) = \#_\DI{E}(T^x)+i\}$ for each $i\in[k]$.
In other words, $r_i$ is the minimal length for a prefix of $T$ which has exactly $i$ more \DI{N}s than \DI{E}s.
By convention, let $r_0=0$.

Observe that $T(r_i) = \DI{N}$ and $\#_\DI{N}(T^{r_i-1}) = \#_\DI{E}(T^{r_i-1}) + i-1$ for each $i\in[k]$ and that $r_k \leq 2n-k$ since $\#_\DI{N} (T^{2n-k}) = \#_\DI{E} (T^{2n-k}) +k$.
Furthermore, note that $r_i \geq t_i$ for each $i\in[k]$.
Indeed, 
\[ \#_\DI{D}(S^{r_i}_\DI{D}) = 
\#_\DI{N}(T^{r_i}) - \#_\DI{N}(S^{r_i}_\DI{D}) = 
\#_\DI{E}(T^{r_i}) + i - \#_\DI{N}(S^{r_i}_\DI{D}) = 
\#_\DI{E}(S^{r_i}_\DI{D}) + i - \#_\DI{N}(S^{r_i}_\DI{D}) \geq i,\]
since $S_{\DI{D}}$ is a Schr\"oder path from $(0,0)$ to $(n,n)$.
In particular, $r_k \geq t_k = 2n-k$ hence $r_k = 2n-k$.

Now create $\hat{T}$ by inserting an \DI{E} before the $k$ \DI{N}s at locations $r_1,r_2,\dots,r_k$ of $T$.
Let $\hat{r}_i$ be the new location of these \DI{N}s in $\hat{T}$, that is $\hat{r}_i = r_i + i$, meaning the inserted \DI{E}s are at locations $\hat{r}_i-1$ in $\hat{T}$.
Again by convention, let $\hat{r}_0=0$.
We have $\#_\DI{E}(\hat{T}) = n = \#_\DI{N}(\hat{T})$ and $\hat{r}_k = 2n$.

We now establish that $\#_\DI{N}(\hat{T}^x)\leq \#_\DI{E}(\hat{T}^x)$ for each $x\in[2n]$.
First suppose $x=\hat{r}_i-1$ for some $i\in[k]$.
Then $x-i = r_i-1$.
Furthermore $\#_\DI{N}(\hat{T}^x)=\#_\DI{N}(T^{x-i})=\#_\DI{N}(T^{r_i-1})=\#_\DI{E}(T^{r_i-1})+i-1$ and 
\[\#_\DI{E}(\hat{T}^x)=\#_\DI{E}(T^{r_i-1})+i=\#_\DI{N}(T^{r_i-1})+1 = \#_\DI{N}(\hat{T}^x)+1.\]
Observe that since $\hat{T}(\hat{r}_i)=\DI{N}$, we have $\#_\DI{E}(\hat{T}^{\hat{r}_i})=\#_\DI{N}(\hat{T}^{\hat{r}_i})$.
This will be used later.

Now suppose $\hat{r}_i \leq x < \hat{r}_{i+1}-1$ where $0\leq i\leq k-1$.
Then $r_i \leq x-i < r_{i+1}$, so $\#_\DI{N}(\hat{T}^x) = \#_\DI{N}(T^{x-i})$.
So by definition of $r_{i+1}$,
\[
\#_\DI{N}(\hat{T}^x)
=\#_\DI{N}(T^{x-i})
\leq
\#_\DI{E}(T^{x-i})+i
=\#_\DI{E}(\hat{T}^x).\]
Additionally since $\hat{r}_k = 2n$, we have $\#_\DI{N}(\hat{T}^{\hat{r}_k})=n=\#_\DI{E}(\hat{T}^{\hat{r}_k})$.
Thus $\#_\DI{N}(\hat{T}^{x})\leq\#_\DI{E}(\hat{T}^{x})$ for all $x\in[2n]$.

Now, create the partition $\tau=(\tau_1,\tau_2,\dots,\tau_k)$ of $n$ where $\tau_i = \ell_i-\ell_{i-1}$ for each $i\in[k]$.
Next, create $W$ by inserting $\tau_i$ copies of \DI{C} after the \DI{N}s in $\hat{T}$ located at $\hat{r}_i$ for each $i\in[k]$.
Let $\doublehat{r}_i$ be the new location of these \DI{N}s in $W$, that is $\doublehat{r}_i = \hat{r}_i + \ell_{i-1}$, meaning the inserted \DI{C}s are at locations $\doublehat{r}_i+1,\dots,\doublehat{r}_i+\tau_i$.

By its creation, it follows from the properties of $\hat{T}$ that $\#_\DI{E}(W)=\#_\DI{N}(W)=\#_\DI{C}(W)=n$ and $\#_\DI{N}(W^x) \leq \#_\DI{E}(W^x)$ for all $x\in[3n]$.
To show that $\#_\DI{C}(W^x) \leq \#_\DI{N}(W^x)$ for all $x\in[3n]$, we need only to show the inequality holds when $x=\doublehat{r}_i+\tau_i$ for each $i\in[k]$.
So let $x=\doublehat{r}_i+\tau_i$ for some $i\in[k]$.  Recall $t_i$ was defined to be the location of the $i\th$ \DI{D} in $S_\DI{D}$, and it was established that $t_i \leq r_i$ for all $i \in [k]$.
Then
\[\#_\DI{N}(W^x) 
= \#_\DI{N}(\hat{T}^{\hat{r}_i})
= \#_\DI{N}(T^{r_i})
\geq \#_\DI{N}(T^{t_i})
= \#_\DI{N}(S^{t_i}) + \#_\DI{D}(S^{t_i})
=\ell_i = \#_\DI{C}(W^x).
\]
Therefore $\#_\DI{C}(W^x) \leq \#_\DI{N}(W^x) \leq \#_\DI{E}(W^x)$ for each $x\in[3n]$.
So by Observation~\ref{observation}, $W$ is a sorting word for a permutation $\pi$ of length $n$.
Moreover, $\#_\DI{E}(W^{\doubleExphat{r}_i})=\#_\DI{N}(W^{\doubleExphat{r}_i})$ for each $i\in[k]$ since $\#_\DI{E}(\hat{T}^{\hat{r}_i})=\#_\DI{N}(\hat{T}^{\hat{r}_i})$.
From Lemma \ref{sorting_req}, $W$ is a DI word of $\pi$.

Finally, one can see by the construction of $W$ that $\DI{E}_{\ell_{i-1}+1}\precw\DI{N}_{\ell_{i-1}+1}\precw\DI{C}_{\ell_{i-1}+1}$ for each $i\in[k]$ and $\DI{C}_{\pi_i-1}\precw\DI{C}_{\pi_i}$ if $\pi_i\notin\{\ell_{i-1}+1\mid i\in[k]\}$.
Hence from 
Theorem~\ref{EN_before_C}, $W$ is an ADI word for some DI-sortable permutation $\pi$.  
Moreover, applying Algorithm~\ref{ADI_word_to_path} to $W$ gives $S$.
\end{proof}

By Lemmas \ref{forward} and \ref{back}, our main result follows: 

\begin{theorem}
For any $n>0$, Algorithm~\ref{ADI_word_to_path} produces a bijection between the set of ADI words from DI-sortable permutations of length $n$ and Schr\"oder paths from $(0,0)$ to $(n-1,n-1)$. 
\end{theorem}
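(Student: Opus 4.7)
The plan is to package Lemmas~\ref{forward} and \ref{back} into a pair of mutually inverse maps. Let $\phi$ denote the map defined by Algorithm~\ref{ADI_word_to_path}, which sends ADI words of DI-sortable permutations of length $n$ to lattice paths from $(0,0)$ to $(n-1,n-1)$. Lemma~\ref{forward} asserts that $\phi$ is well-defined, i.e.\ its image actually lies in the set of Schr\"oder paths. Lemma~\ref{back} implicitly defines a map $\psi$ in the opposite direction via the explicit reverse construction in its proof, and verifies that $\phi(\psi(S)) = S$ for every Schr\"oder path $S$; this gives surjectivity of $\phi$ and exhibits $\psi$ as a right inverse.

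What remains is to confirm injectivity, which I would do by verifying that $\psi(\phi(W)) = W$ for every ADI word $W$. Given $W$ with partition $\tau=(\tau_1,\dots,\tau_k)$ of maximal $\DI{C}$-block lengths and intermediate word $T$ as in Algorithm~\ref{ADI_word_to_path}, one chases the three steps of $\psi$ applied to $\phi(W)$: appending a $\DI{D}$ to $S$ recovers $S_\DI{D}$, replacing each $\DI{D}$ by $\DI{N}$ recovers $T$ verbatim, and the gaps between consecutive $\DI{D}$s in $S_\DI{D}$ read off exactly the partition $\tau$ used to reinsert the $\DI{C}$-blocks.

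The substantive step is to check that the positions $r_i$ at which $\psi$ inserts the missing $\DI{E}$s into $T$ coincide with the positions from which $\phi$ deleted the leading $\DI{E}$ of each maximal $\DI{ENC}^{\tau_i}$ block of $W$. I expect this to follow from Corollary~\ref{greedy}: the leading $\DI{E}$ of the $i$th $\DI{ENC}^{\tau_i}$ block in $W$ occurs at the earliest stage at which the first stack has just emptied for the $i$th time, so in the compressed word $T$ this is the earliest prefix $T^{r_i}$ with $\#_{\DI{N}}(T^{r_i}) = \#_{\DI{E}}(T^{r_i}) + i$, which is exactly the definition $\psi$ uses to locate the $\DI{E}$-insertions. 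This matching of positions is the only step that is not mere symbol-pushing and will be the main obstacle.

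Once $\psi\circ\phi = \mathrm{id}$ is established, $\phi$ is injective; combined with the surjectivity from Lemma~\ref{back} and the well-definedness from Lemma~\ref{forward}, this shows that $\phi$ is a bijection between the set of ADI words of length-$n$ DI-sortable permutations and the set of Schr\"oder paths from $(0,0)$ to $(n-1,n-1)$, with $\psi$ as its two-sided inverse.
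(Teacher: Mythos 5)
Your proposal is correct and its skeleton matches the paper's: the paper proves this theorem in a single line by citing Lemma~\ref{forward} (well-definedness of $\phi$) and Lemma~\ref{back} (surjectivity, via the explicit reverse construction $\psi$ with $\phi(\psi(S))=S$). Where you genuinely differ is in supplying an explicit injectivity argument by checking $\psi(\phi(W))=W$, which the paper does not do; as written, the paper's proof yields a surjection and implicitly relies either on the reverse construction being visibly a two-sided inverse or on the previously known fact (Kremer) that both sets have cardinality equal to the same large Schr\"oder number, so your version is the more self-contained one. Your identification of the crux is also right: everything reduces to showing that the positions $r_i=\min\{x\mid \#_\DI{N}(T^x)=\#_\DI{E}(T^x)+i\}$ used in the reverse construction coincide with the positions in $T$ of the $\DI{N}$s surviving from the deleted blocks $\DI{ENC}^{\tau_i}$. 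However, Corollary~\ref{greedy} is not quite the right tool here, and ``the first stack has just emptied for the $i$th time'' is not literally correct, since the first stack can also empty at $\DI{N}$s that do not precede a maximal $\DI{C}$-block. The clean argument is: by Theorem~\ref{EN_before_C} each maximal $\DI{C}$-block is immediately preceded by $\DI{EN}$, and by Lemma~\ref{sorting_req} (cf.\ Observation~\ref{obs:rho_tau}) the first stack is empty at that $\DI{N}$, so $\#_\DI{E}(W^{\hat x_i})=\#_\DI{N}(W^{\hat x_i})$ where $\hat x_i$ is its position in $W$; forming $T$ deletes exactly $i$ of the block-leading $\DI{E}$s from $W^{\hat x_i}$ but at most $i-1$ of them from any prefix $W^{z}$ with $z\leq \hat x_i-2$, so the prefix inequality $\#_\DI{N}(W^{z})\leq\#_\DI{E}(W^{z})$ of Observation~\ref{observation} gives $\#_\DI{N}(T^{y})\leq\#_\DI{E}(T^{y})+i-1$ for every earlier position $y$ of $T$, while equality $\#_\DI{N}=\#_\DI{E}+i$ holds at the surviving $\DI{N}$; hence that $\DI{N}$ sits exactly at $r_i$. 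With that step filled in, your proof is complete, and it buys a bijection that does not lean on the known enumeration of either side.
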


\begin{example}
Suppose we start with the Schr\"oder path $S=\DI{EDNEDEEDNNN}$ illustrated in Figure~\ref{81736245}.  
We demonstrate the inverse of Algorithm~\ref{ADI_word_to_path} described in Lemma~\ref{back}.

\begin{figure}[ht]
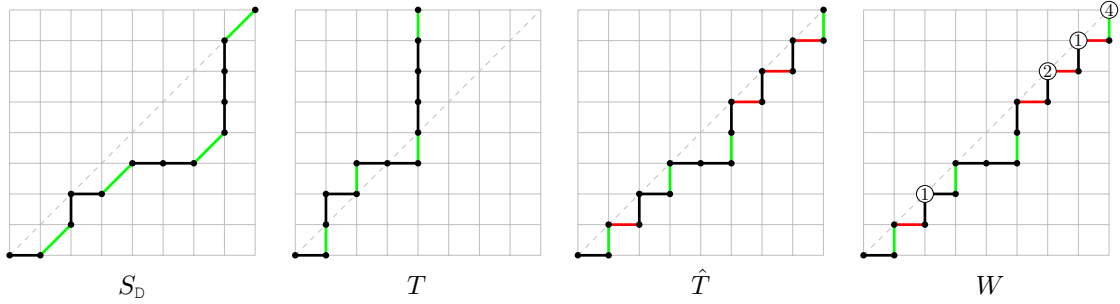

\centering
\begin{tabular}{@{}cccc@{}}
\pic[.68]{38} &
\pic[.68]{2} &
\pic[.68]{6} &
\pic[.68]{7} \\
$S_\DI{D}$ & $T$ & $\hat{T}$ & $W$
\end{tabular}
\caption{The process by which the Schr\"oder path in Figure \ref{81736245} becomes an ADI-word.
Circled numbers indicate consecutive copies of \DI{C}.}
\label{fig:graph_inverse}
\end{figure}

We begin by producing $S_\DI{D}=\DI{EDNEDEEDNNND}$ by appending a diagonal (Figure \ref{fig:graph_inverse}(a)).
We identify $\ell=(1,3,4,8)$ and $\tau=(1,2,1,4)$ based on the relative locations of the diagonal edges with respect to the north edges.
We construct $T=\DI{E\textcolor{blue}{N}NE\textcolor{blue}{N}EE\textcolor{blue}{N}NNN\textcolor{blue}{N}}$ by converting each diagonal edge to a north edge (Figure \ref{fig:graph_inverse}(b)).

Next we find $r=(3,10,11,12)$ and construct $\hat{T}=\DI{EN\textcolor{blue}{EN}ENEENN\textcolor{blue}{EN}\textcolor{blue}{EN}\textcolor{blue}{EN}}$ by inserting \DI{E} before the \DI{N} at each of the locations in $T$ specified by $r$.
Note that $\hat{r}=(4,12,14,16)$, and with this we construct $W=\DI{ENE\textcolor{blue}{NC}ENEENNE\textcolor{blue}{NCC}E\textcolor{blue}{NC}E\textcolor{blue}{NCCCC}}$.
Observe that $W$ is the ADI word of 81736245, and Algorithm~\ref{ADI_word_to_path} applied to $W$ gives $S$ (see Example \ref{ex:reverse}).
\end{example}

\section{Permutation properties observable from their Schr\"oder paths}
\label{sec:properties}

While the bijection obtained from Algorithm~\ref{ADI_word_to_path} can be used to show exactly which Schr\"oder path corresponds to a given DI-sortable permutation, there are several properties that translate prominently between permutation and path.  

\begin{definition}  A \emph{right-to-left minimum} of a permutation is an element that is the least element seen thus far when reading the permutation from right to left.
\end{definition}

\begin{example}  The permutation $\pi = 81736245$ has four right-to-left minima, namely $1,2,4,5$.
\end{example}

\begin{lemma}~\label{number_RtLm}
If $W$ is the ADI word of a DI-sortable permutation $\pi$, then $\DI{E}_{\pi_i}\precw\DI{N}_{\pi_i}\precw\DI{C}_{\pi_i}$ if and only if $\pi_i$ is a right-to-left minimum.
Furthermore, the number of right-to-left minima of a DI-sortable permutation $\pi$ is one more than the number of diagonal steps in its corresponding Schr\"oder path.
\end{lemma}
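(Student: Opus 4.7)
The plan is to first establish the local $\DI{ENC}$-characterization of right-to-left minima, and then read off the count from Algorithm~\ref{ADI_word_to_path}.

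For the forward implication, suppose $\pi_i$ is a right-to-left minimum. The case $\pi_i=1$ follows immediately from Theorem~\ref{EN_before_C}. When $\pi_i>1$, every value $v<\pi_i$ lies at a position $j<i$ in $\pi$, so $\DI{E}_v\lew\DI{E}_{\pi_i}$. I would apply Corollary~\ref{greedy} at the step $x$ with $W(x)=\DI{E}_{\pi_i}$: if some value below $\pi_i$ had not yet been output at step $x$, then the current minimum not-yet-output value would be some $v'<\pi_i$ whose $\DI{E}_{v'}$ has already occurred, and this would contradict the conclusion $W(x)\neq\DI{E}$. Hence $\DI{C}_{\pi_i-1}\lew\DI{E}_{\pi_i}$. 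Feeding this into Theorem~\ref{EN_before_C}, the option $\DI{C}_{\pi_i-1}\precw\DI{C}_{\pi_i}$ would place $\DI{C}_{\pi_i}$ exactly one position after $\DI{C}_{\pi_i-1}$, hence at or before $\DI{E}_{\pi_i}$, contradicting $\DI{E}_{\pi_i}\lew\DI{C}_{\pi_i}$; so $\DI{E}_{\pi_i}\precw\DI{N}_{\pi_i}\precw\DI{C}_{\pi_i}$ must hold.

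For the reverse implication, assume $\DI{E}_{\pi_i}\precw\DI{N}_{\pi_i}\precw\DI{C}_{\pi_i}$ and suppose toward contradiction that some $\pi_j=v<\pi_i$ has $j>i$. Then $\DI{E}_{\pi_i}\lew\DI{E}_v$ (symbols enter in the order they appear in $\pi$) and $\DI{C}_v\lew\DI{C}_{\pi_i}$ by Observation~\ref{obs:wordprops}~(\ref{obs:wordpropsCC}), so $\DI{E}_v$ sits strictly between $\DI{E}_{\pi_i}$ and $\DI{C}_{\pi_i}$ in $W$. But the adjacency $\DI{E}_{\pi_i}\precw\DI{N}_{\pi_i}\precw\DI{C}_{\pi_i}$ leaves no room for an intervening $\DI{E}$, a contradiction.

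For the counting statement, the equivalence just established identifies the right-to-left minima of $\pi$ with the indices satisfying $\DI{E}_{\pi_i}\precw\DI{N}_{\pi_i}\precw\DI{C}_{\pi_i}$, and by Theorem~\ref{EN_before_C} these are in one-to-one correspondence with the $k$ maximal $\DI{C}$-substrings of $W$. Tracing Algorithm~\ref{ADI_word_to_path}, each such substring yields exactly one $\DI{D}$ in $S_\DI{D}$; removing the terminal $\DI{D}$ leaves $\#_\DI{D}(S)=k-1$ diagonal steps in the corresponding Schr\"oder path, so the right-to-left minimum count is one more than the diagonal count. The main subtlety I anticipate is applying Corollary~\ref{greedy} at the correct instant: its hypothesis requires the current minimum not-yet-output value to have already entered, so I must argue the contradiction using a specific $v<\pi_i$ known to be in the machine rather than $\pi_i$ itself, which is only entering at step $x$.
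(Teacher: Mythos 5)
Your proof is correct, and its second half (the reverse implication and the count via Algorithm~\ref{ADI_word_to_path}) coincides with the paper's argument. The forward implication is where you diverge: the paper uses Theorem~\ref{EN_before_C} to locate the head $\DI{E}_{\pi_k}\precw\DI{N}_{\pi_k}\precw\DI{C}_{\pi_k}$ of the maximal block of \DI{C}s containing $\DI{C}_{\pi_i}$, and then derives $i<k$ and $\pi_k<\pi_i$ from Observation~\ref{obs:wordprops}, contradicting right-to-left minimality unless $k=i$. You instead invoke Corollary~\ref{greedy} at the step $W(x)=\DI{E}_{\pi_i}$ to conclude $\DI{C}_{\pi_i-1}\lew\DI{E}_{\pi_i}$, and then eliminate the alternative $\DI{C}_{\pi_i-1}\precw\DI{C}_{\pi_i}$ offered by Theorem~\ref{EN_before_C}. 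Both routes work, and the subtlety you flag is handled correctly: Corollary~\ref{greedy} must be applied to the minimum not-yet-output value $v'<\pi_i$, whose $\DI{E}_{v'}$ has already occurred precisely because $\pi_i$ is a right-to-left minimum, not to $\pi_i$ itself. The trade-off is that the paper's version needs only Theorem~\ref{EN_before_C} and the basic order observations, whereas yours routes through Corollary~\ref{greedy} to re-establish the intermediate fact $\DI{C}_{\pi_i-1}\lew\DI{E}_{\pi_i}$ --- essentially retracing a step already embedded in the proof of Theorem~\ref{EN_before_C}; in exchange, your argument makes explicit the sorting-machine picture (all smaller values have exited before a right-to-left minimum enters), which the paper's purely word-combinatorial contradiction leaves implicit.
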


\begin{proof}
%
Suppose $\pi_i$ is a right-to-left minimum of $\pi$ with ADI word $W$.
There must be some $k\in[n]$ that satisfies $\DI{E}_{\pi_k}\precw\DI{N}_{\pi_k}\precw\DI{C}_{\pi_k}\precw\cdots\precw\DI{C}_{\pi_i}$ by Theorem~\ref{EN_before_C}.  Assume by way of contradiction that $k \neq i$.
Then $\DI{E}_{\pi_i}\lew\DI{E}_{\pi_k}$ and $\DI{C}_{\pi_k}\lew\DI{C}_{\pi_i}$.
Hence $i < k$ and $\pi_k < \pi_i$, which is a contradiction to $\pi_i$ being a right-to-left minimum.
So $\DI{E}_{\pi_i}\precw\DI{N}_{\pi_i}\precw\DI{C}_{\pi_i}$.


Now suppose that $\DI{E}_{\pi_i}\precw\DI{N}_{\pi_i}\precw\DI{C}_{\pi_i}$ for some $i\in[n]$ and let $j\in[n]$ such that $i < j$.
Then $\DI{E}_{\pi_i} \lew \DI{E}_{\pi_j}$.
Therefore $\DI{C}_{\pi_i} \lew \DI{E}_{\pi_j}$, so $\DI{C}_{\pi_i}\lew \DI{C}_{\pi_j}$.
Hence $\pi_i < \pi_j$.
So $\pi_i$ is a right-to-left minimum.
%
%
%
%

Hence there is a one-to-one correspondence between consecutive sequences \DI{ENC} in $W$ and right-to-left minima in $\pi$. And as seen in Algorithm \ref{ADI_word_to_path}, these sequences give rise to one fewer diagonal in the corresponding Schr\"oder path of $\pi$.
\end{proof}

%
%

Lemma~\ref{number_RtLm} leads to a visual proof to the Corollary~\ref{Catalan}.

\begin{corollary}~\label{Catalan}
The DI-sortable permutations with one right-to-left minimum, that is the DI-sortable permutations that end in $1$, are enumerated by the Catalan numbers.  In particular, the number of DI-sortable permutations of length $n+1$ ending in $1$ is $\displaystyle{C_{n}=\frac{1}{n+1}{\binom{2n}{n}}}$.
\end{corollary}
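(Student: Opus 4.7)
The plan is to reduce the statement to the classical enumeration of Dyck paths via Lemma~\ref{number_RtLm} and the bijection of Algorithm~\ref{ADI_word_to_path}. First I would establish the elementary equivalence that a permutation $\pi$ of length $n+1$ ends in $1$ if and only if $\pi$ has exactly one right-to-left minimum. The forward direction is immediate since $1$ is the global minimum and is therefore automatically a right-to-left minimum. For the converse, suppose the only right-to-left minimum is the final entry $\pi_{n+1}$; then $\pi_{n+1} \leq \pi_i$ for every $i$, forcing $\pi_{n+1}=1$, since otherwise the position containing $1$ would give a right-to-left minimum distinct from $\pi_{n+1}$.

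Next, I would invoke Lemma~\ref{number_RtLm}: the number of right-to-left minima of a DI-sortable permutation is one more than the number of diagonal steps in its corresponding Schr\"oder path. So DI-sortable permutations of length $n+1$ ending in $1$ are in bijection (via Algorithm~\ref{ADI_word_to_path}) with Schr\"oder paths from $(0,0)$ to $(n,n)$ that contain zero diagonal steps.

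A Schr\"oder path from $(0,0)$ to $(n,n)$ with no diagonal step uses only East steps $(1,0)$ and North steps $(0,1)$ while remaining weakly below $y=x$; this is precisely the standard definition of a Dyck path of semi-length $n$. The number of such paths is the well-known Catalan number $C_n = \frac{1}{n+1}\binom{2n}{n}$, completing the proof.

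There is no real obstacle in this argument, since all of the substantive content is contained in Lemma~\ref{number_RtLm} together with the classical enumeration of Dyck paths; the corollary is essentially a visual reading of the bijection restricted to diagonal-free paths.
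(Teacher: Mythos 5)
Your proposal is correct and matches the paper's primary argument: the paper likewise reads Corollary~\ref{Catalan} off Lemma~\ref{number_RtLm} by observing that permutations ending in $1$ correspond to diagonal-free Schr\"oder paths, i.e.\ Dyck paths, counted by $C_n$. The paper additionally sketches an alternative proof via the basis $\{3142,3241\}$ (the first $n$ entries must avoid $213$), but your route is exactly its main one, with the elementary equivalences spelled out a bit more fully.
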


\begin{proof}
The Schr\"oder paths from $(0,0)$ to $(n,n)$ without diagonal steps are Dyck paths, known to be enumerated by the Catalan numbers.

Alternatively, recall that the basis for our DI-sortable permutations is $\{3142, 3241\}$.  Thus, the DI-sortable permutations of length $n+1$ ending in $1$ are exactly the permutations whose first $n$ entries must avoid $213$, which are known to be enumerated by Catalan numbers, $C_n$~\cite{knuth:the-art-of-comp:1}.
\end{proof}

For a given diagonal in a Schr\"oder path $S$ and prefix $S'$ of $S$ terminating after the given diagonal, the we say the \emph{height} of the diagonal is $\#_\DI{N}(S')+\#_\DI{D}(S')$.
In other words, its height is the $y$-coordinate of the point in the path where the diagonal terminates.

\begin{example}
Let $\pi=81736245$ with corresponding Schr\"oder path $S$ given in Figure \ref{81736245}.
Then the height of its diagonals are $1$, $3$, and $4$.
\end{example}

\begin{theorem}~\label{diagonal_height}
The heights of the diagonals determine the values of the right-to-left minima.  
In particular, if the heights of the diagonals in the Schr\"oder path corresponding to the permutation $\pi$ are $h_1, h_2, \ldots, h_k$, the right-to-left minima of $\pi$ are $1, h_1+1, h_2+1,\ldots, h_k+1$.
\end{theorem}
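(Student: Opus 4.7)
The plan is to read off the values of the right-to-left minima directly from the structure of the ADI word $W$, and then verify via Algorithm~\ref{ADI_word_to_path} that these values are exactly one more than the heights of the diagonals of the corresponding Schr\"oder path $S$.

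First I would invoke Lemma~\ref{number_RtLm}: a value $\pi_i$ is a right-to-left minimum of $\pi$ if and only if $\DI{E}_{\pi_i}\precw\DI{N}_{\pi_i}\precw\DI{C}_{\pi_i}$ in $W$. By Theorem~\ref{EN_before_C}, such a configuration occurs precisely at the first \DI{C} of each maximal consecutive substring of \DI{C}s in $W$. Letting there be $K$ such maximal substrings, we obtain $K$ right-to-left minima, and by construction in Algorithm~\ref{ADI_word_to_path} the path $S$ has $K-1$ diagonals (the final \DI{D} in $S_\DI{D}$ is stripped off).

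Next I would pin down which specific values are output by each maximal \DI{C}-substring. By Observation~\ref{obs:wordprops}(\ref{obs:wordpropsCC}) the \DI{C}s correspond to values output in strictly increasing order, and by Theorem~\ref{EN_before_C} each $\DI{C}_{\pi_i}$ not at the head of a maximal \DI{C}-substring is immediately preceded by $\DI{C}_{\pi_i-1}$. Hence within a maximal \DI{C}-substring the output values are consecutive integers, and globally the $j$-th substring outputs exactly the block $\ell_{j-1}+1,\ldots,\ell_j$ (with the convention $\ell_0=0$), beginning with $\ell_{j-1}+1$. Combined with the previous paragraph, the right-to-left minima of $\pi$ are exactly
\[
\{\ell_{j-1}+1 : 1\le j\le K\} \;=\; \{1,\;\ell_1+1,\;\ell_2+1,\;\ldots,\;\ell_{K-1}+1\}.
\]

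Finally I would convert the $\ell_j$'s to diagonal heights. By Algorithm~\ref{ADI_word_to_path}, the $j$-th \DI{D} in $S_\DI{D}$ occupies position $\ell_j$ within the subsequence of \DI{N}s and \DI{D}s of $S_\DI{D}$; since \DI{N} and \DI{D} are precisely the $y$-incrementing steps of the lattice path, this diagonal terminates at $y=\ell_j$, which is its height. Removing the final \DI{D} of $S_\DI{D}$ (at height $\ell_K=n$) to obtain $S$ leaves diagonals of heights $\ell_1,\ldots,\ell_{K-1}$. Relabelling $k=K-1$ and $h_j=\ell_j$ yields the claimed list of right-to-left minima $1, h_1+1, h_2+1, \ldots, h_k+1$. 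I do not anticipate a serious obstacle: the hard structural facts about maximal \DI{C}-substrings are already packaged in Lemma~\ref{number_RtLm} and Theorem~\ref{EN_before_C}, and the only care required is tracking the one-position index shift that arises when the final \DI{D} of $S_\DI{D}$ is stripped to form $S$.
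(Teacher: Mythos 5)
Your proposal is correct and follows essentially the same route as the paper's proof: both use Lemma~\ref{number_RtLm} to identify the right-to-left minima with the \DI{ENC} configurations at the heads of the maximal \DI{C}-substrings, conclude that these minima are exactly the values $\ell_{j-1}+1$, and then read off the height of the $j$-th diagonal of $S$ as $\ell_j$ from Step 3 of Algorithm~\ref{ADI_word_to_path}. Your write-up is slightly more explicit about why the $j$-th \DI{C}-block outputs the consecutive values $\ell_{j-1}+1,\ldots,\ell_j$ (via Observation~\ref{obs:wordprops}(\ref{obs:wordpropsCC})), but the argument is the same.
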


\begin{proof}
Suppose that $S$ has $k$ diagonals (and hence $\pi$ has $k+1$ right-to-left minima), and let $W$ be the ADI word of $\pi$.
Notice since $\ell_i$ counts the \DI{C}s in the first $i$ maximal substrings of \DI{C}s, we have $\ell_i$ counts $\DI{C}_1,\DI{C}_2,\ldots, \DI{C}_{\ell_i}$.  
Thus $\DI{E}_{a}\precw\DI{N}_{a}\precw\DI{C}_{a}$ if and only if $a=\ell_i+1$ for some $i$ where $0\leq i \leq k$.
(Recall $\ell_0=0$ by convention.)
From Lemma \ref{number_RtLm} the right-to-left minima	are exactly $\ell_i+1$ for $0\leq i \leq k$.  

Let $x_1,x_2,\dots,x_k$ be locations in $S$ so that $x_i$ is the location of the $i\th$ \DI{D}; that is $S(x_i)=\DI{D}$ and $\#_\DI{D}(S^{x_i})=i$.
From Step 3 in Algorithm~\ref{ADI_word_to_path}, we observe that $\#_\DI{N}(S^{x_i})+\#_\DI{D}(S^{x_i})=\ell_i$ is the height of the $i\th$ diagonal for each $i\in[k]$.
So the heights of the diagonals of $S$ are one less than the right-to-left minima of $\pi$ (excluding 1).
\end{proof}

\begin{definition}  An \emph{interval} of a permutation $\pi$ is a consecutive subsequence of $\pi$ that contains consecutive values.
\end{definition}

\begin{example}  The permutation $\sigma = 685712943$ contains intervals $6857, 12, 9, 43$.
\end{example}

\begin{definition}
A permutation $\pi$ is said to be \emph{plus-decomposable} if $\pi$ is the concatenation of two non-empty intervals $\omega$ and $\tau'$ where the values of $\omega$ are less than those of $\tau'$.  Further, if we rescale the entries of $\tau'$ by subtracting the length of $\omega$ from each entry of $\tau'$ to get a permutation $\tau$, we denote $\pi = \omega \oplus \tau$.  If a permutation is not plus-decomposable, we say the permutation is \emph{plus-indecomposable}.
\end{definition}

\begin{example}  The permutation $\pi=43126758$ is plus-decomposable and can be written as $\pi = 4312 \oplus 231\oplus 1$.  The permutation $\sigma = 685712943$ is plus-indecomposable.
\end{example}

\begin{theorem}  
\label{thm:decomp}
The Schr\"oder path corresponding to a plus-decomposable DI-sortable permutation $\pi = \omega \oplus \tau$ is the Schr\"oder path for $\omega$ followed by a diagonal step (on the main diagonal) followed by the Schr\"oder path for $\tau$.  Furthermore, this means the Schr\"oder paths with diagonal steps on the main diagonal correspond exactly to the plus-decomposable DI-sortable permutations.
\end{theorem}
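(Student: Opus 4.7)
My plan is to show that the ADI word of $\pi = \omega \oplus \tau$ is the concatenation $W_\omega W_\tau$ of the ADI words of $\omega$ and $\tau$, to transport this fact through Algorithm~\ref{ADI_word_to_path} to obtain $S_\pi = S_\omega \cdot \DI{D} \cdot S_\tau$, and then to invoke the bijection for the converse. Writing $m = |\omega|$, my first and most substantive step would be to argue that when Algorithm~\ref{sorting_algorithm} runs on $\pi$, no entry of the shifted tail $\tau' = \tau + m$ enters the first stack until every entry of $\omega$ has been output. For this I would assume otherwise and consider the earliest stage $x$ at which $W(x) = \DI{E}_{m+s}$ while some $\omega$-entry is still un-output: the minimum un-output value $\pi_i$ then satisfies $\pi_i \leq m$ and $\DI{E}_{\pi_i} \lew W(x)$ (since $\omega$ precedes $\tau'$ in $\pi$), so Corollary~\ref{greedy} forbids an $\DI{E}$-step. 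Granted this, the moves made while processing $\omega$ depend only on the relative orders of values in $\{1,\dots,m\}$ and reproduce $W_\omega$ verbatim; the algorithm then restarts with empty stacks on $\tau'$, yielding $W_\tau$, so $W_\pi = W_\omega W_\tau$.

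Next I would push $W_\pi = W_\omega W_\tau$ through Algorithm~\ref{ADI_word_to_path}. Since $W_\omega$ ends in a maximal $\DI{C}$-run and $W_\tau$ opens with $\DI{E}$, the maximal $\DI{C}$-runs of $W_\pi$ are exactly those of $W_\omega$ followed by those of $W_\tau$: the partition $\tau_\pi$ and cumulative list $\ell_\pi$ are the obvious concatenations, $T_\pi = T_\omega T_\tau$, and $\#_\DI{N}(T_\omega) = m$. Hence the $\DI{D}$-substitutions arising from the $\omega$-half fall inside the $T_\omega$-block while those arising from the $\tau$-half fall inside the $T_\tau$-block; the terminal $\DI{D}$ at position $n$ that Algorithm~\ref{ADI_word_to_path} removes sits in the $\tau$-block, while the terminal $\DI{D}$ at position $m$ from the $\omega$-half is retained and becomes the pivotal diagonal of $S_\pi$. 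Assembling these observations yields $S_\pi = S_\omega \cdot \DI{D} \cdot S_\tau$, and the inserted $\DI{D}$ begins at $(m-1,m-1)$, which lies on the main diagonal.

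For the converse, I would start with a DI-sortable $\pi$ of length $n$ whose Schr\"oder path $S$ contains a diagonal step from $(m-1,m-1)$ to $(m,m)$ and decompose $S = S_1 \cdot \DI{D} \cdot S_2$, with $S_1$ a Schr\"oder path of size $m-1$ and $S_2$ (after translation) one of size $n-m-1$. Lemma~\ref{back} produces a DI-sortable $\omega$ of length $m$ with path $S_1$ and a DI-sortable $\tau$ of length $n-m$ with path $S_2$. Because neither $3142$ nor $3241$ can straddle a $\oplus$-decomposition (any such occurrence would require a value in the $\omega$-half larger than a value in the $\tau$-half), $\omega \oplus \tau$ is again DI-sortable, and by the forward direction its path is $S$; bijectivity then forces $\pi = \omega \oplus \tau$, as required. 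I expect the main obstacle to be the initial equality $W_\pi = W_\omega W_\tau$, since that is the only place where one must extract substantive behavior from Algorithm~\ref{sorting_algorithm}; the remaining arguments are careful bookkeeping through Algorithm~\ref{ADI_word_to_path} together with the routine plus-sum argument for the basis $\{3142,3241\}$.
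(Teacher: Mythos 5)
Your proposal is correct and follows essentially the same route as the paper: the crux in both is invoking Corollary~\ref{greedy} to show every entry of $\omega$ is output before any entry of the shifted tail enters the stacks, so the ADI word of $\pi$ is the concatenation of the ADI words of $\omega$ and $\tau$, after which Algorithm~\ref{ADI_word_to_path} turns the retained intermediate \DI{D} into the on-diagonal step. Your only addition is spelling out the converse (splitting $S$ at an on-diagonal \DI{D} and using closure of the class under $\oplus$ plus bijectivity), which the paper leaves implicit.
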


\begin{proof}
Consider the plus-decomposable permutation $\pi = \omega \oplus \tau$ and let $W$ be the ADI word of $\pi$.  
%
%
Let $x\in[3n-1]$ satisfy $W(x)=\DI{E}_{\pi_k}$ with $k=|\omega|+1$ and $\pi_i = \min\{\pi_\ell\mid W(x)\lew C_{\pi_\ell}\}$.
Then by Corollary~\ref{greedy}, we have $\DI{E}_{\pi_k} = W(x)\leqw\DI{E}_{\pi_i}$, so $k\leq i$.
Since $\pi = \omega\oplus\tau$ and $i\geq k = |\omega|+1$, we have $\pi_i>|\omega|$.
Hence by the definition of $\pi_i$, we have $\DI{C}_{|\omega|}\lew W(x) = \DI{E}_{\pi_k}$.
Thus we know Algorithm~\ref{sorting_algorithm} will force the exit of all the entries of $\omega$ before any entry of $\tau'$ (entries of $\pi$ corresponding to $\tau$) enters the stacks.  
Therefore the ADI word for $\pi$ will be the ADI word for $\omega$ followed by the ADI word for $\tau$.

When Algorithm~\ref{ADI_word_to_path} acts on the ADI word for $\pi$, it will convert the first portion of the ADI word to the Schr\"oder path for $\omega$, but followed by $\DI{D}$ as this $\DI{D}$ is no longer the last $\DI{D}$ of the word.  At the same time, it converts the remaining portion of the ADI word to the Schr\"oder path for $\tau$.
\end{proof}

\begin{example}  
\label{ex:plusdecomp}
In Figure~\ref{fig:plusdecomp}, we see that this Schr\"oder path for $\pi=43126758=4312\oplus231\oplus1$ is the concatenation of the Schr\"oder paths for $4312$, $231$, and $1$ (a degenerate path), connected by diagonal edges.
\end{example}

\begin{figure}[ht]
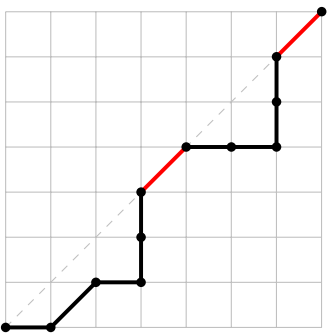

\centering
\pic{30}
\caption{The plus-decomposition of the Schr\"oder path of $43126758$ as given in Example \ref{ex:plusdecomp}.}
\label{fig:plusdecomp}
\end{figure}

Aguiar and Moreira~\cite{aguiar:baxter} showed that the number of Schr\"oder paths from $(0,0)$ to $(n,n)$ that do not have diagonal steps on the main diagonal equals the number of Schr\"oder paths from $(0,0)$ to $(n,n)$ that do have diagonal steps on the main diagonal as a particular case of a larger result.  That is, they showed the number of Schr\"oder paths that do not have diagonal steps on the main diagonal are enumerated by the small Schr\"oder numbers, which gives us the following corollary.

\begin{corollary}  The plus-decomposable DI-sortable permutations are in bijection with the plus-indecomposable DI-sortable permutations.  In particular, both classes are enumerated by the small Schr\"oder numbers.
\end{corollary}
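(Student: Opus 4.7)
My plan is to reduce the corollary to the analogous statement about Schröder paths by using the bijection supplied by Algorithm~\ref{ADI_word_to_path}. The key translation is already in hand: by Theorem~\ref{thm:decomp}, the bijection sends plus-decomposable DI-sortable permutations of length $n$ exactly to those Schröder paths from $(0,0)$ to $(n-1,n-1)$ that contain at least one diagonal step lying on the main diagonal $y=x$. Consequently, the complementary set of plus-indecomposable DI-sortable permutations of length $n$ corresponds exactly to Schröder paths from $(0,0)$ to $(n-1,n-1)$ with no diagonal steps on the main diagonal. So I only need to check that these two classes of lattice paths are equinumerous and that their common cardinality is the small Schröder number.

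Next, I would invoke the Aguiar--Moreira result cited immediately before the corollary, which provides a bijection between Schröder paths from $(0,0)$ to $(n,n)$ having at least one diagonal step on the main diagonal and Schröder paths from $(0,0)$ to $(n,n)$ having no diagonal step on the main diagonal. Composing this bijection on paths with Algorithm~\ref{ADI_word_to_path} (and its inverse furnished by Lemma~\ref{back}) yields the desired bijection between the two classes of permutations.

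For the enumeration assertion, I would appeal to the standard relationship between the large and small Schröder numbers: the small Schröder number counts precisely the Schröder paths with no diagonal step on the main diagonal, and it equals half the large Schröder number for $n\geq 1$. Since the Aguiar--Moreira bijection shows that paths with a main-diagonal diagonal and paths without one each account for exactly half of all Schröder paths, each of the two corresponding permutation classes is enumerated by the small Schröder numbers.

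The proof is essentially a packaging argument, so there is no serious obstacle. The only point that warrants care is verifying that Theorem~\ref{thm:decomp} provides an exact characterization (both directions): the final sentence of that theorem does state that Schröder paths with a diagonal on the main diagonal correspond \emph{exactly} to the plus-decomposable DI-sortable permutations, so the translation is clean. Given this, no further lattice-path combinatorics need to be redone here; the result follows by citing Theorem~\ref{thm:decomp} and the Aguiar--Moreira equinumerosity.
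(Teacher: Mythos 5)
Your proposal is correct and follows essentially the same route as the paper: translate via Theorem~\ref{thm:decomp} to Schr\"oder paths with or without diagonal steps on the main diagonal, then invoke the Aguiar--Moreira equinumerosity and the small Schr\"oder number count. The paper treats this as immediate from the preceding paragraph, so no further detail is needed.
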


We conclude with a few open questions:
\begin{enumerate}
\item Are there any other properties of the DI-sortable permutations that are easily seen by looking at the corresponding Schr\"oder paths?
\item  Is there another bijection between the DI-sortable permutations and the Schr\"oder paths that conserves more or different properties of the DI-sortable permutations in the Schr\"oder paths?
 \end{enumerate}
 
\acknowledgements
\label{sec:ack}
We would like to thank the referees for their careful reading of this paper as well as their multiple suggestions for improvement and clarity.

\bibliographystyle{acm}
\bibliography{schroder}
\label{sec:biblio}

\end{document}